\documentclass[11pt,a4paper]{article}

\usepackage{authblk}

\usepackage{mathrsfs}
\usepackage{amsfonts}
\usepackage{amsthm}
\usepackage{authblk}
\usepackage{cite}
\usepackage{amsmath,amssymb,amscd,amsthm}
\usepackage{graphics}
\usepackage{enumerate}
\usepackage[colorlinks=true, linkcolor=magenta, urlcolor=blue, citecolor=blue]{hyperref}
\input xy
\xyoption{all}

\numberwithin{equation}{section}

\usepackage{scalefnt}   

\usepackage{comment}
\usepackage{setspace}

\usepackage[paperwidth=210mm,
paperheight=297mm,
a4paper,
twoside,
top=0.8in, left=0.9in, right=0.9in, bottom=1in]{geometry}

\usepackage[active]{srcltx}

\theoremstyle{definition}
\newtheorem{lemma}{Lemma}[section]
\newtheorem{claim}{Claim}
\newtheorem{prop}[lemma]{Proposition}
\newtheorem{cor}[lemma]{Corollary}

\newtheorem{defi}[lemma]{Definition}
\newtheorem{thm}[lemma]{Theorem}

\newtheorem{rmk}[lemma]{Remark}

\newcommand{\C}{\mathbb{C}}
\newcommand{\N}{\mathbb{N}}
\newcommand{\Z}{\mathbb{Z}}

\usepackage{float}
\restylefloat{figure}

\allowdisplaybreaks

\begin{document}
	\title{Local and Quasi Derivations of the Mirror Heisenberg-Virasoro Algebra
}

    \author{Shun Liu and Dashu Xu\thanks{Corresponding Author: dox@mail.ustc.edu.cn}}

	\date{}
	\maketitle
	\begin{abstract}
		We determine the generalized and local derivations of the mirror Heisenberg-Virasoro algebra. As a by-product, we give a complete characterization of $\delta$-(bi)derivations. 
		\bigskip
        
		\noindent{\em 
        Key words: mirror Heisenberg–Virasoro algebra; quasi-derivation; local derivation; $\delta$-(bi)derivation
        }
		
	\end{abstract}
	
	\section{Introduction}
    The mirror Heisenberg-Virasoro algebra is an infinite-dimensional Lie algebra that appears in superconformal field theory \cite{B}.
    It is the semi-direct product of the Virasoro algebra and the twisted Heisenberg algebra.
    Of particular Lie algebraic interest are its intrinsic structure and representation theory.
    Irreducible Harish-Chandra modules and smooth modules were fully classified in \cite{L,TYZ}. 
    Certain non-weight irreducible modules, including Whittaker modules, Cartan-free modules and their tensor products, were investigated in \cite{GMZ}. 
    In \cite{GZ}, the authors considered the tensor product of a module of intermediate series and a highest weight module, leading to a family of irreducible weight modules with infinite-dimensional weight spaces.
    The concept of quasi-Whittaker modules for non-semisimple Lie algebras was recently proposed in \cite{C}, and such modules over the mirror Heisenberg–Virasoro algebra have already been analyzed.
    For structural properties, derivations, automorphism group, and biderivations have been completely determined in \cite{LC,ZC}, while generalized and local derivations are yet to be classified.
    
    In this paper, we focus on the generalized and local derivations of the mirror Heisenberg-Virasoro algebra.
    The notion of generalized derivations of Lie algebras was first introduced by G. F. Leger Jr. and E. M. Luks in \cite{LL}.
    This concept is related to, but not limited to, transposed $\delta$-Poisson algebras \cite{AKS,Bai} and post-Lie algebraic structures \cite{B&D}. 
    Generalized derivations have further been extended to various algebraic frameworks; a comprehensive survey on this topic can be found in \cite{Kay}.
    The concept of local derivations was introduced by R.~V. Kadison \cite{Kad}, D. Larson and A. Sourour \cite{Lar}.
    Local derivations for various algebraic structures have been determined in the extensive literature.
    The study of local derivations in the Lie algebra framework has been well established for finite-dimensional semisimple Lie algebras \cite{AK}, Witt algebras \cite{CZZ}, twisted Heisenberg-Virasoro algebra \cite{BO}, and numerous other Lie algebras.
    In the current paper, we are concerned with the local derivations of the mirror Heisenberg-Virasoro Lie algebra.
    
    In Section \ref{sec2}, we recall some notions that are necessary for the current paper.
    In Section \ref{sec3}, we shall determine all the generalized derivations of the mirror Heisenberg-Virasoro algebra.
    As a by-product, we obtain all the $\delta$-(bi)derivations of the mirror Heisenberg-Virasoro algebra.
    In Section \ref{sec4}, we study local derivations of the mirror Heisenberg-Virasoro algebra.
    
    Now, let us state the main results.
    Denote by $\mathcal{D}$ the mirror Heisenberg-Virasoro algebra.
    Then, it has a $\C$-basis
    $$
    \bigg\{ L_{m},\mathbf{c} \,\bigg|\, m \in \mathbb{Z} \bigg\}
    \cup
    \bigg\{ I_{r},\mathbf{l} \,\bigg|\, r \in \frac{1}{2} + \mathbb{Z} \bigg\},
    $$
    and the non-vanishing Lie brackets are as follows:
    \begin{align*}
    \left[L_m, L_n\right] &= (m-n)L_{m+n}+\delta_{m+n,0}\frac{m^3-m}{12}\mathbf{c},\\
    \left[L_m, I_{r}\right] &= -rI_{m+r},\qquad\qquad
    \left[I_{r}, I_{s}\right] = r\delta_{r+s,0}\mathbf{l},
    \end{align*}
    where $m,n\in\Z$ and $r,s\in \frac12+\Z$.
    Denote by $\mathrm{Inn}(\mathcal{D})$ the space of inner derivations of $\mathcal{D}$, and define $$\mathrm{Ann}(\mathcal{D}):=\left\{f\in\mathrm{End_{\C}(\mathcal{D})}\mid f(\mathcal{D})\subseteq \C\mathbf{c}\oplus\C\mathbf{l}\right\}.$$
    Define an outer derivation $\tau \in\mathfrak{Der}(\mathcal{D})$ by $\tau(L_m)=\tau(\mathbf{c})=0$, $\tau(I_r)=I_r$, and $\tau(\mathbf{l})=2\mathbf{l}$. 
    Let $\mathfrak{GDer}(\mathcal{D})$ be the space of generalized derivations of $\mathcal{D}$.
    The main results of this paper is stated as follows.
    \begin{thm}\label{1.1}
        $\mathfrak{GDer}(\mathcal{D})=\C \tau \oplus\C id_{\mathcal{D}}\oplus\mathrm{Inn}(\mathcal{D})\oplus\mathrm{Ann}(\mathcal{D})$.
    \end{thm}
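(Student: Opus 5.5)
Recall the Leger--Luks notion: $f\in\mathfrak{GDer}(\mathcal{D})$ if there are $f',f''\in\mathrm{End}_\C(\mathcal{D})$ with
$$[f(x),y]+[x,f'(y)]=f''([x,y])\qquad\text{for all } x,y\in\mathcal{D}.$$
The inclusion ``$\supseteq$'' is immediate. Every derivation (inner ones and $\tau$) is generalized with $f'=f''=f$. The identity works with $f'=0$, $f''=id$. If $f\in\mathrm{Ann}(\mathcal{D})$, then $f(\mathcal{D})$ lies in the center $\C\mathbf{c}\oplus\C\mathbf{l}$, so $f'=0$, $f''=0$ works.

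For ``$\subseteq$'' I pass to the quasi-derivation picture, as is standard. If $f$ is generalized, then the triple $(f,f',f'')$ gives a quasi-derivation $f$ (with $f''$ as partner), because symmetrizing the defining identity lets one replace $f'$ by $f$ modulo terms landing in the center. The aim is to show $f\equiv a\,id+b\tau+\mathrm{ad}\,u \pmod{\mathrm{Ann}(\mathcal{D})}$.

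\textbf{Step 1 (grading reduction).} $\mathcal{D}$ is $\frac12\Z$-graded by $\mathrm{ad}\,L_0$. I decompose $f=\sum_{k}f_k$ into homogeneous components of degree $k$, where each $f_k$ is again generalized with partners $f'_k,f''_k$. Applying the identity with $x=L_0$ or $y=L_0$ shows that for $k\neq0$ one can subtract $\frac{1}{k}\mathrm{ad}(f_k(L_0))$-type inner derivations (using $L_k$ or $I_k$ according to whether $k\in\Z$ or $k\in\frac12+\Z$). After this we may assume $f_k(L_0)\in$ center, and then show $f_k\in\mathrm{Ann}(\mathcal{D})$. This step is also where finiteness has to be handled: only finitely many $f_k$ are nonzero modulo $\mathrm{Ann}$ on each basis element, so the resulting inner part is a finite sum.

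\textbf{Step 2 (degree zero).} Write $f_0(L_m)=\alpha_m L_m+\cdots$, $f_0(I_r)=\beta_r I_r$, and similarly for $f'_0,f''_0$. The Virasoro relations $[L_m,L_n]=(m-n)L_{m+n}$ force the generalized-derivation recursion $\alpha_m+\alpha'_n=\alpha''_{m+n}$ for $m\neq n$. This gives $\alpha_m=a+bm$, and the $bm$ term is $\mathrm{ad}\,L_0$-like, hence absorbed. Feeding in $[L_m,I_r]=-rI_{m+r}$ and $[I_r,I_s]$ then forces $\beta_r$ to be constant, say $a+c$. The constant $a$ gives $id$ and $c$ gives $c\tau$, up to central terms.

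\textbf{Main obstacle.} The main obstacle is the degree-zero recursion with three independent sequences and the finitely many exceptional indices, such as $m=n$, $r=0$ impossible, and small $m$ where the coefficient $m^3-m$ vanishes. These must be pinned down by using several triples $(m,n)$ with the same sum. I would first determine the $L$-part by restricting to the centerless Witt quotient, where results on the Witt algebra are known, and then bootstrap the $I$-part by using $[L_{\pm1},I_r]$ and $[L_{\pm2},I_r]$ to connect all $r$.
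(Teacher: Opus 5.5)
There is a genuine gap, and it lies in the nonzero-degree components, not in degree zero. In Step~1 you normalize $f_k(L_0)$ to be central and then assert that $f_k\in\mathrm{Ann}(\mathcal{D})$, without argument. The only route you indicate (solve on the centerless Witt quotient, then bootstrap the $I$-part through $[L_{\pm1},I_r]$, $[L_{\pm2},I_r]$) cannot give this.

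The Witt algebra has non-trivial $\tfrac12$-derivations $L_n\mapsto L_{n+j}$, and they survive after adjoining the twisted Heisenberg part. On $\mathcal{D}/\C\mathbf{c}$, set $f(L_n)=L_{n+j}$, $f(I_r)=0$, $f(\mathbf{l})=0$. Then $[f(x),y]+[x,f(y)]=f'([x,y])$ holds with $f'(L_n)=2L_{n+j}$, $f'(I_r)=I_{r+j}$, $f'(\mathbf{l})=0$. For $j\neq0$, the map $f-\tfrac1j\mathrm{ad}(L_j)$ vanishes on $L_0$ but sends $L_n\mapsto\tfrac nj L_{n+j}$ and $I_r\mapsto\tfrac rj I_{r+j}$. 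So without $\mathbf{c}$ it is a counterexample to your Step~1 claim.

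The only thing that kills it is the Virasoro cocycle. In $\mathcal{D}$, take the $\mathbf{c}$-coefficient of $[f(L_m),L_{-m-j}]+[L_m,f(L_{-m-j})]=f'([L_m,L_{-m-j}])$. It would force $f'(L_{-j})=2L_0+\tfrac{m^2+mj+j^2-1}{12}\mathbf{c}$ for every $m$ with $2m+j\neq0$, which is impossible. The missing ingredient is therefore the central-charge equation. The paper uses \eqref{qd3} together with \eqref{qd1}, handled via the Virasoro computation of \cite{KKS}, to pin down the $L\to L$ block before treating the $B$, $X$, $Y$ blocks by hand. Your degree-zero analysis in Step~2 is essentially correct, but it is the easy part.

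Your reduction from generalized to quasi-derivations is also mis-justified. Swapping $x$ and $y$ gives $(f',f,f'')\in\Delta(\mathcal{D})$. Hence $\tfrac12(f+f')$ is a quasi-derivation with partner $f''$, and $\tfrac12(f-f')\in\mathrm{QC}(\mathcal{D})$. But the quasi-centroid is not center-valued: $(id_{\mathcal{D}},-id_{\mathcal{D}},0)\in\Delta(\mathcal{D})$, and $id_{\mathcal{D}}$ is not a quasi-derivation with partner $0$. So "replace $f'$ by $f$ modulo central terms" is false, as is the claim that $f''$ serves as partner.

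The equality $\mathrm{G}\mathfrak{Der}(\mathcal{D})=\mathrm{Q}\mathfrak{Der}(\mathcal{D})$ does hold, but only after computing $\mathrm{QC}(\mathcal{D})=\C id_{\mathcal{D}}\oplus\mathrm{Ann}(\mathcal{D})$. The paper does this in Proposition~\ref{quasi-centroid} and combines it with the Leger--Luks identity \eqref{gqq}. Your Steps~1--2 actually carry three independent maps $f,f',f''$, so you could drop this reduction. In that case the nonzero-degree argument above, including the use of the $\mathbf{c}$-coefficient, must be carried out for the full triple.
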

    \noindent Utilizing this theorem, we obtain all the $\delta$-(bi)derivations of $\mathcal{D}$.
    Suppose $\mathfrak{Der}_{\delta}(\mathcal{D})$ (resp. $\mathfrak{BD}_\delta(\mathcal{D})$) is the space of $\delta$-derivations (resp. $\delta$-biderivations). 
    The following corollaries hold.
    \begin{cor}
        \[
\mathfrak{Der}_{\delta}(\mathfrak{D}) = \left\{
\begin{aligned}
&\operatorname{Inn}(\mathcal{D})\oplus \mathbb{C}\tau\qquad\quad\qquad \delta=1;\\
&\C id_{\mathcal{D}} \qquad\qquad\qquad\quad\;\;\,\,\,\,\delta=\frac{1}{2};  \\
&0 \qquad\qquad\qquad\qquad\quad\;\text{otherwise}.
\end{aligned}
\right.
\] 
    \end{cor}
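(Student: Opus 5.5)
The plan is to view a $\delta$-derivation as a special generalized derivation, apply Theorem \ref{1.1}, and then use that $\mathcal{D}$ is perfect to pin down the coefficients. Recall that $f$ is a $\delta$-derivation if $f([x,y])=\delta\big([f(x),y]+[x,f(y)]\big)$ for all $x,y\in\mathcal{D}$.

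First I verify that $\mathcal{D}=[\mathcal{D},\mathcal{D}]$, using the following brackets:
\begin{itemize}
\item $[L_1,L_{-1}]=2L_0$ gives $L_0$.
\item $[L_0,L_m]=-mL_m$ for $m\neq 0$ gives every $L_m$ with $m\neq 0$.
\item $[L_2,L_{-2}]=4L_0+\frac12\mathbf{c}$ then gives $\mathbf{c}$.
\item $[L_0,I_r]=-rI_r$ with $r\neq0$ gives every $I_r$.
\item $[I_{1/2},I_{-1/2}]=\frac12\mathbf{l}$ gives $\mathbf{l}$.
\end{itemize}
Consequently, for $\delta=0$ the defining identity reads $f([x,y])=0$, so $f$ vanishes on $[\mathcal{D},\mathcal{D}]=\mathcal{D}$. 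Hence $\mathfrak{Der}_0(\mathcal{D})=0$, which is consistent with the ``otherwise'' case.

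Now let $\delta\neq 0$. Then $f$ is a generalized derivation, with $f'=f$ and $f''=\delta^{-1}f$ in the triple $[f(x),y]+[x,f'(y)]=f''([x,y])$. By Theorem \ref{1.1} we can write
\[
f=a\tau+b\,id_{\mathcal{D}}+\mathrm{ad}\,u+\varphi,\qquad a,b\in\C,\ u\in\mathcal{D},\ \varphi\in\mathrm{Ann}(\mathcal{D}).
\]
We substitute this into the $\delta$-derivation identity. The map $d:=a\tau+\mathrm{ad}\,u$ is an honest derivation, so $\delta([d x,y]+[x,dy])=\delta\, d([x,y])$. For the identity part, $\delta([bx,y]+[x,by])=2\delta b[x,y]$. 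The map $\varphi$ takes values in the center $\C\mathbf{c}\oplus\C\mathbf{l}$, so $[\varphi(x),y]=[x,\varphi(y)]=0$. The identity therefore becomes
\[
\Big((1-\delta)(a\tau+\mathrm{ad}\,u)+(1-2\delta)b\,id_{\mathcal{D}}+\varphi\Big)([x,y])=0\quad\text{for all }x,y.
\]
Since $\mathcal{D}$ is perfect, the operator in parentheses vanishes identically on $\mathcal{D}$.

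By the directness of the sum in Theorem \ref{1.1}, each component must vanish separately:
\[
(1-\delta)a=0,\qquad (1-\delta)\,\mathrm{ad}\,u=0,\qquad (1-2\delta)b=0,\qquad \varphi=0.
\]
These conditions give the three cases.
\begin{itemize}
\item If $\delta=1$, then $b=0$ and $\varphi=0$, so $f\in\mathrm{Inn}(\mathcal{D})\oplus\C\tau$.
\item If $\delta=\frac12$, then $a=0$, $\mathrm{ad}\,u=0$ and $\varphi=0$, so $f\in\C\,id_{\mathcal{D}}$.
\item Otherwise every component vanishes, so $f=0$.
\end{itemize}
Conversely, $\tau$ and inner derivations are $1$-derivations, and $id_{\mathcal{D}}$ is a $\frac12$-derivation because $[x,y]=\frac12\cdot 2[x,y]$.

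The step needing the most care is the passage from a vanishing operator to vanishing components. It relies on Theorem \ref{1.1} genuinely being a direct sum, including the fact that $\tau$ and $id_{\mathcal{D}}$ are independent modulo $\mathrm{Inn}(\mathcal{D})\oplus\mathrm{Ann}(\mathcal{D})$. If I needed to double-check this by hand, I would evaluate on $L_1$, $I_{1/2}$ and $L_0$ and compare components there.
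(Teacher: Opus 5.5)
Your proof is correct and follows essentially the paper's argument: decompose $f$ via the main classification theorem, use $\mathcal{D}=[\mathcal{D},\mathcal{D}]$ to obtain $(1-\delta)f_1+(1-2\delta)f_2+f_3=0$, and conclude from the directness of the sum. Beyond the paper, you explicitly handle $\delta=0$, verify perfectness, and check the converse inclusions, none of which the paper spells out.
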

    \begin{cor} 
        $\mathfrak{BD}_{\delta}(\mathcal{D})\ne 0$ iff $\delta=1$, while $\mathfrak{BD}_{1}(\mathcal{D})$ is determined in \cite[Theorem 4.1]{LC}.
    \end{cor}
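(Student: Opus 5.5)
The plan is to reduce the statement to the preceding corollary on $\delta$-derivations by freezing one argument at a time. Recall that a $\delta$-biderivation is a bilinear map $\varphi:\mathcal{D}\times\mathcal{D}\to\mathcal{D}$ such that for every fixed $z\in\mathcal{D}$ both maps $\varphi(\cdot,z)$ and $\varphi(z,\cdot)$ are $\delta$-derivations:
\[
\varphi([x,y],z)=\delta\big([\varphi(x,z),y]+[x,\varphi(y,z)]\big),
\]
and similarly in the second slot. (If the paper's definition also imposes skew-symmetry, this does not affect the argument.) Hence for each $z$ the linear maps $\varphi_z:=\varphi(\cdot,z)$ and $\psi_z:=\varphi(z,\cdot)$ lie in $\mathfrak{Der}_{\delta}(\mathcal{D})$, and the previous corollary applies to them directly.

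\textbf{The case $\delta\neq 1,\frac12$.} The preceding corollary gives $\mathfrak{Der}_\delta(\mathcal{D})=0$, so $\varphi_z=0$ for every $z$, and therefore $\varphi=0$. This covers $\delta=0$ as well. Indeed, a $0$-derivation kills $[\mathcal{D},\mathcal{D}]$, and $\mathcal{D}$ is perfect: $L_0=\frac12[L_1,L_{-1}]$, $\mathbf{c}$ and $\mathbf{l}$ arise from central terms of brackets, $L_m=\frac1m[L_m,L_0]$ for $m\neq0$, and $I_r$ is a multiple of $[L_0,I_r]$. This is consistent with the corollary's ``otherwise'' clause.

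\textbf{The case $\delta=\frac12$.} Here $\mathfrak{Der}_{1/2}(\mathcal{D})=\C\, id_{\mathcal{D}}$. So there are scalar functions $\lambda,\mu:\mathcal{D}\to\C$, linear by bilinearity of $\varphi$, such that
\[
\varphi(x,z)=\lambda(z)x=\mu(x)z\qquad\text{for all }x,z\in\mathcal{D}.
\]
For any $z\neq 0$, choose $x$ linearly independent of $z$; this is possible since $\dim\mathcal{D}>1$. The identity $\lambda(z)x=\mu(x)z$ then forces $\lambda(z)=0$ and $\mu(x)=0$. Hence $\lambda\equiv 0$ and $\varphi=0$.

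\textbf{The case $\delta=1$.} Here the bracket $\varphi(x,y)=[x,y]$ is a nonzero $1$-biderivation by the Jacobi identity. So $\mathfrak{BD}_1(\mathcal{D})\neq0$, and its full description is the one given in \cite[Theorem 4.1]{LC}.

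There is no serious obstacle here. The only points needing care are checking that bilinearity makes $\lambda$ and $\mu$ linear functionals, and that the $\delta=0$ case is genuinely covered by the previous corollary, which rests on $\mathcal{D}$ being perfect.
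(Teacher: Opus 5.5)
Your proof is correct. For $\delta\neq1,\frac12$ it is the paper's argument: freeze one slot and apply Corollary~\ref{t-dd}.

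The routes differ at $\delta=\frac12$:
\begin{itemize}
\item The paper freezes only the first argument, writing $f(x,\cdot)=\lambda_x\, id_{\mathcal{D}}$. It then evaluates at the central element $\mathbf{c}$, using $[\mathcal{D},\mathcal{D}]=\mathcal{D}$, $[\mathcal{D},\mathbf{c}]=0$ and \cite[Proposition 3.1]{XCK} to get $0=f(x,\mathbf{c})=\lambda_x\mathbf{c}$.
\item You use that both partial maps are scalar, so $\lambda(z)x=\mu(x)z$, and finish by linear independence.
\end{itemize}
Your argument is self-contained and needs only $\dim\mathcal{D}>1$ at this step. The paper's argument needs just one slot, but it relies on an external result together with perfectness and a nonzero center.

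You also make explicit two points the paper leaves implicit:
\begin{itemize}
\item You exhibit a nonzero element of $\mathfrak{BD}_1(\mathcal{D})$, namely the bracket itself (by Jacobi). This is what the ``if'' direction actually requires; the paper only defers to \cite{LC}.
\item You note that $\delta=0$ is covered only because $\mathcal{D}$ is perfect. The step ``$f$ is certainly a quasi-derivation'' in the proof of Corollary~\ref{t-dd} is automatic only for $\delta\neq0$.
\end{itemize}
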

Regarding the local derivations, we prove the following theorem.
\begin{thm}
    Every local derivation of $\mathcal{D}$ is a derivation.
\end{thm}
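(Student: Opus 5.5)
We argue in the style of the Witt-algebra and twisted Heisenberg–Virasoro cases \cite{CZZ,BO}. We use the description of $\mathfrak{Der}(\mathcal{D})$ from \cite{LC,ZC}: every derivation has the form $\mathrm{ad}\,x+\lambda\tau$ with $x\in\mathcal{D}$ and $\lambda\in\C$. This is consistent with the Corollary above for $\delta=1$. Also recall that $\mathcal{D}$ is graded by $\frac12\Z$ with respect to $\mathrm{ad}\,L_0$.

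Let $\Delta$ be a local derivation, so for each $x\in\mathcal{D}$ there is $D_x=\mathrm{ad}\,a_x+\lambda_x\tau$ with $\Delta(x)=D_x(x)$.

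\textbf{Step 1: normalise on $L_0$ and $L_1$.} Since $D_{L_0}(L_0)=[a_{L_0},L_0]$ has no degree-$0$ component, we can choose $y_0\in\mathcal{D}$ with $\Delta(L_0)=[y_0,L_0]$. Replacing $\Delta$ by $\Delta-\mathrm{ad}\,y_0$, we may assume $\Delta(L_0)=0$.

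Next evaluate at $x=L_0+L_1$ (or more generally $L_0+tL_1$). The condition
$$\Delta(L_0)+\Delta(L_1)=[a,L_0+L_1]$$
forces $\Delta(L_1)$ to be of the form $[z,L_1]$ with $[z,L_0]=0$, i.e.\ $z\in\C L_0+\C\mathbf{c}+\C\mathbf{l}$. The key computation here is to compare homogeneous components of $a$. The lowest and highest nonzero components of $a$ must cancel, and a finite-support argument then forces $a$ to be supported in degrees $0$ and $\pm$ small. So after subtracting $\mu\,\mathrm{ad}\,L_0$, we get $\Delta(L_0)=\Delta(L_1)=0$.

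\textbf{Step 2: show $\Delta$ vanishes on all $L_m$ up to $\tau$.} For a single $x=L_m$, $\Delta(L_m)=[a,L_m]$. Testing against $L_0+L_m$ shows that $\Delta(L_m)$ is a multiple of $L_m$ plus central terms. Testing against $L_1+L_m$ then pins the coefficient to $0$; the central parts are killed by degree considerations. We argue similarly for $I_r$: evaluating at $L_0+I_r$ and $L_1+I_r$ gives $\Delta(I_r)=\lambda_r I_r$. The $\tau$-component $\lambda_r$ is a priori dependent on $r$, so we test on $I_r+I_s$ and on $L_1+I_r$, using $[L_1,I_r]=-rI_{r+1}$, to make $\lambda_r$ a constant $\lambda$. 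For the centre, $\Delta(\mathbf{c})=0$, and $\Delta(\mathbf{l})=D(\mathbf{l})=2\lambda_{\mathbf{l}}\mathbf{l}$, which is matched to $\lambda$ via $x=I_{1/2}+\mathbf{l}$. Subtracting $\lambda\tau$, we reach $\Delta(b)=0$ for every basis element $b$.

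\textbf{Step 3: extend to all of $\mathcal{D}$ (main obstacle).} Linearity alone does not give this, since $\Delta$ is only linear with a different $D_x$ for each $x$. Instead, for arbitrary $x=\sum x_k$ (finite sum of homogeneous parts) we show $\Delta(x)=0$. Linearity of $\Delta$ together with Step 2 gives $\Delta=0$ on the basis, hence everywhere, since $\Delta$ itself is linear. So the real difficulty lies in Steps 1–2, in showing that the elements $a$ appearing in the test identities have controlled support. I expect this to be the hardest part. The tool is to examine the extreme degree components of $[a,L_0+L_m]$ and exploit $[L_0,\cdot]$ acting by $-\deg$. That forces all components of $a$ outside a bounded window to vanish and reduces matters to finitely many linear equations in the coefficients.
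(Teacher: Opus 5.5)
Your outline is the paper's: normalise on $L_0$ and $L_1$, kill $\Delta$ on every $L_m$, show $\Delta(I_r)=\lambda I_r$ with $\lambda$ independent of $r$, handle $\mathbf{c}$ and $\mathbf{l}$, and subtract $\lambda\tau$. Step 3 is not an obstacle at all, since a linear map vanishing on a basis is zero, exactly as in Theorem~\ref{thm-4-1}. The gap is in Steps 1--2. There each reduction is read off from a \emph{single} test element, and the test elements you propose impose essentially no constraint. Already for $L_1$, with $\Delta(L_0)=0$, the vector $y=3L_3+2L_4$ satisfies $y=[3L_2+L_3,L_1]=[L_3,L_0+L_1]$. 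So $\Delta(L_1)=y$ is compatible with the local conditions at $L_0$, $L_1$ and $L_0+L_1$, yet $y\notin\C L_1$. Similarly, $5L_5+3L_7=[5L_3+L_5,L_2]=[L_5,L_0+L_2]$ defeats the test $L_0+L_m$. Comparing extreme degree components of the witness only bounds its support in terms of the unknown support of $\Delta(L_m)$, and the resulting finite linear system is in general solvable, as these examples show. What works is the whole family $L_m+xL_0$, $x\in\C^*$, with a witness $a_x$ depending on $x$. Eliminating the coefficients of $a_x$ successively within each residue class modulo $m$ turns the top-degree equation into a polynomial identity in $x^{-1}$ whose constant term is the top coefficient of $\Delta(L_m)$; this identity fails for generic $x$ (Lemmas~\ref{lem-L-1} and~\ref{lem-L-2}). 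Your parenthetical $L_0+tL_1$ is therefore not an optional generalisation: the free parameter is the entire mechanism. Once $\Delta(L_m)=a_mL_m$ is known, your single test $L_1+L_m$ does suffice; that is Lemma~\ref{lem-L-3}.

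The steps for $I_r$ and $\mathbf{l}$ fail more drastically. Suppose $\Delta(L_m)=0$ for all $m$. For every $s\notin\{-r,1-r\}$ one has $I_s=[-r^{-1}L_{s-r},I_r]=[s^{-1}I_s,L_0+I_r]=[(s-1)^{-1}I_{s-1},L_1+I_r]$. So $\Delta(I_r)=I_s$ with $s\neq r$ passes the tests at $I_r$, $L_0+I_r$ and $L_1+I_r$: an $I$-component of the witness bracketed against a single $L_k$ can produce any $I_s$. Likewise, $[cL_0,I_r+I_s]+\mu(I_r+I_s)$ realises every $\lambda_rI_r+\lambda_sI_s$ with $r\neq s$, and $L_1+I_r$ involves only $\lambda_r$, so nothing forces $\lambda_r$ to be constant. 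Finally, $\lambda I_{\frac12}+\rho\mathbf{l}=[-2\lambda L_0-2\rho I_{-\frac12},\,I_{\frac12}+\mathbf{l}]$ for all $\lambda,\rho$, so $I_{\frac12}+\mathbf{l}$ says nothing about $\Delta(\mathbf{l})$. The missing ideas are the test elements of Lemmas~\ref{lem-L-4} and~\ref{lemm-6}. The first is $I_{m+\frac12}+L_p+L_q$ with $p\ll0\ll q$ outside the support window of $\Delta(I_{m+\frac12})$. This forces the $L$-part of the witness into $\C(L_p+L_q)$, and any stray $I$-component is shifted by both $p$ and $q$ out of the window, giving $\Delta(I_{m+\frac12})\in\C I_{m+\frac12}$ with no $\mathbf{l}$-term. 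The second is $L_1+L_{-1}+I_{m+\frac12}+I_{m-\frac12}$, plus $\mathbf{l}$ for the last step. Here the summand $L_1+L_{-1}$ forces the $L$-part of the witness into $\C(L_1+L_{-1})$. That removes exactly the $\mathrm{ad}\,L_0$ freedom that otherwise absorbs any difference between the $\lambda_r$.
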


Throughout this paper, we use $\C$, $\C^*$, $\Z$, $\Z^*$, $\Z_+$ and $\N$ to denote the sets of complex numbers, non-zero complex numbers, integers, non-zero integers, non-negative integers and positive integers, respectively.
The vector spaces in this paper are assumed to be over $\C$.
For two sets $A$ and $B$, we use $A\setminus B$ to denote the set $\left\{a\mid a\in A, a\notin B\right\}$. 
\section{Preliminaries}\label{sec2}
Let $\mathfrak{g}$ be a Lie algebra. 
First, we recall some notions related to derivations introduced in \cite{LL,AK}.

\begin{defi} \label{gder}
Let $\Delta(\mathfrak{g})$ denote the set of triples $(f, f', f'')$ with $f, f', f'' \in \operatorname{End}(\mathfrak{g})$ such that
\[
[f(x), y] + [x, f'(y)] = f''([x,y]) \quad \text{for all } x, y \in \mathfrak{g}.
\]
(1)\;The maps that belong to
\[
\mathrm{G}\mathfrak{Der}(\mathfrak{g}) = \left\{ f \in \operatorname{End}(\mathfrak{g}) \mid \exists\, f', f''\in\operatorname{End}(\mathfrak{g})  \text{ such that } (f, f', f'') \in \Delta(\mathfrak{g}) \right\}
\]
are called the \textit{generalized derivations} of $\mathfrak{g}$.

\noindent(2)\;The maps that belong to
\[
\mathrm{Q}\mathfrak{Der}(\mathfrak{g}) = \left\{ f \in \operatorname{End}(\mathfrak{g}) \mid \exists\, f'\in\operatorname{End}(\mathfrak{g})\text{ such that } (f, f, f') \in \Delta(\mathfrak{g}) \right\}
\]
are called the \textit{quasi-derivations} of $\mathfrak{g}$.

\noindent(3)\;For $\delta \in \mathbb{C}$, the maps that belong to 
\[
\mathfrak{Der}_{\delta}(\mathfrak{g}) = \left\{ f \in \operatorname{End}(\mathfrak{g}) \mid (\delta f, \delta f, f) \in \Delta(\mathfrak{g}) \right\}
\]
are called the \textit{$\delta$-derivations} of $\mathfrak{g}$.
The notion of $\delta$-derivations is due to Filippov \cite{F98}. 
In particular, 1-derivations are precisely derivations, and we will simply denote them by $\mathfrak{Der}(\mathfrak{g})$.
We use $\mathrm{Inn}(\mathfrak{g})$ to denote the space of inner derivations.
Denote by
\[
\mathfrak{Der}_{[\delta]}(\mathfrak{g})=\sum_{\delta\in\C}\mathfrak{Der}_{\delta}(\mathfrak{g}).
\]

\noindent (4)\;The \textit{quasi-centroid} of $\mathfrak{g}$ is defined to be
\[
\mathrm{QC}(\mathfrak{g}) = \left\{ f \in \operatorname{End}(\mathfrak{g}) \mid (f, -f, 0) \in \Delta(\mathfrak{g}) \right\}.
\]
\end{defi}

\begin{rmk}
(1)\;Let $Z(\mathfrak{g})$ be the center of $\mathfrak{g}$. Then $f\left(Z(\mathfrak{g})\right) \subseteq Z(\mathfrak{g})$ for each $f \in \mathrm{G}\mathfrak{Der}(\mathfrak{g})$. 

\noindent(2)\;If $\mathfrak{g}=[\mathfrak{g},\mathfrak{g}]$ and $(f,f,f')\in\Delta(\mathfrak{g})$, then $f'$ is uniquely determined by $f$.

\noindent(3)\;The following chain of inclusions holds.
\[
\mathfrak{Der}(\mathfrak{g}) \subseteq \mathfrak{Der}_{[\delta]}(\mathfrak{g}) \subseteq \mathrm{Q}\mathfrak{Der}(\mathfrak{g}) \subseteq \mathrm{G}\mathfrak{Der}(\mathfrak{g}).
\]
\end{rmk}

\noindent In \cite{LL}, Leger and Luks proved the following fact:
\begin{equation}\label{gqq}
\mathrm{G}\mathfrak{Der}(\mathfrak{g})=\mathrm{Q}\mathfrak{Der}(\mathfrak{g})+\mathrm{QC}(\mathfrak{g}).
\end{equation}
Finally, let us recall the concept of local derivations for Lie algebras.
\begin{defi}
    A linear map $f\in\mathrm{End}(\mathfrak{g})$ is called a \textit{local derivation} if for every $x\in\mathfrak{g}$, there exists a derivation $D_x\in\mathfrak{Der}(\mathfrak{g})$ that depends on $x$ such that $f(x)=D_x(x)$.
\end{defi}
\begin{rmk}
While every derivation is a local derivation, there exist Lie algebras that admit local derivations which are not derivations \cite{KJ}.
\end{rmk}

\section{Generalized derivations}\label{sec3}
According to Equation \eqref{gqq},
the first step is to determine all the quasi-derivations.
\begin{thm}\label{dsc of q}
 $\mathrm{Q}\mathfrak{Der}(\mathcal{D})=\C \tau \oplus\C id_{\mathcal{D}}\oplus\mathrm{Inn}(\mathcal{D})\oplus\mathrm{Ann}(\mathcal{D})$. 
 \begin{proof}
 The proof is splitted into two claims.
 \begin{claim}\label{c1}
 $\mathrm{Q}\mathfrak{Der}(\mathcal{D})=\C \tau +\C id_{\mathcal{D}}+\mathrm{Inn}(\mathcal{D})+\mathrm{Ann}(\mathcal{D})$. 
 \end{claim}
Suppose $f,f'\in\mathrm{End}(\mathcal{D})$ such that
\begin{equation}
\label{dq}
[f(x),y]+[x,f(y)]=f'([x,y])
\end{equation}
holds for $x,y\in\mathcal{D}$.
Assume the following general forms for $f$ and $f'$, respectively.
\begin{align*}
      f(L_m)&=\sum_{k \in \Z}A(m,k)L_k+\sum_{k \in \Z}B(m,k+\frac{1}{2} )I_{k+\frac{1}{2}} +C(m)\mathbf{c} +D(m)\mathbf{l}, \\[10pt]
      f(I_{m+\frac{1}{2}})&=\sum_{k \in \Z}X(m+\frac{1}{2} ,k)L_k+\sum_{k \in \Z}Y(m+\frac{1}{2} ,k+\frac{1}{2} )I_{k+\frac{1}{2} }+Z(m+\frac{1}{2} )\mathbf{c} +W(m+\frac{1}{2} )\mathbf{l}, \\[10pt]
      f(\mathbf{c} )&=E\mathbf{c} +F\mathbf{l},\\[10pt]
      f(\mathbf{l})&=G\mathbf{c} +H\mathbf{l};\\[10pt]
      f'(L_m)&=\sum_{k \in \Z}A'(m,k)L_k+\sum_{k \in \Z}B'(m,k+\frac{1}{2} )I_{k+\frac{1}{2}} +C'(m)\mathbf{c} +D'(m)\mathbf{l}, \\[10pt]
      f'(I_{m+\frac{1}{2} })&=\sum_{k \in \Z}X'(m+\frac{1}{2} ,k)L_k+\sum_{k \in \Z}Y'(m+\frac{1}{2}, k+\frac{1}{2} )I_{k+\frac{1}{2} }+Z'(m+\frac{1}{2} )\mathbf{c} +W'(m+\frac{1}{2} )\mathbf{l}, \\[10pt]
      f'(\mathbf{c} )&=\sum_{k \in \Z} P'(k)L_k+\sum_{k \in \Z} Q'(k+\frac{1}{2} )I_{k+\frac{1}{2} }+E'\mathbf{c} +F'\mathbf{l},  \\[10pt]
      f'(\mathbf{l})&=\sum_{k \in \Z}R'(k)L_k+\sum_{k \in \Z} S'(k+\frac{1}{2} )I_{k+\frac{1}{2} }+G'\mathbf{c} +H'\mathbf{l}.
 \end{align*}  
 The proof is divided into two main steps.

Step 1:
In this step, we introduce several quasi-derivations to facilitate the reduction process.
Define a derivation $G:\mathcal{L}\to\mathcal{L}$ in the following way:
\[
G:=[A(0,0)-A(1,1)] \mathrm{ad} \left ( L_0 \right )+ \sum_{k \in \Z^*}\frac{A(0,k)}{k} \mathrm{ad}(L_k)+\sum_{k \in \Z}\frac{2B\left ( 0,k+\frac{1}{2} \right )  }{2k+1} \mathrm{ad} \left ( I_{k+\frac{1}{2}}  \right ).
\]
Then, by direct calculations, we obtain the following equations:
\begin{align*}
    G(L_0)&=\sum_{k \in \Z^*}A( 0,k )L_k+\sum_{k \in \Z}B \left ( 0,k+\frac{1}{2} \right ) I_{k+\frac{1}{2} },\\[10pt]
    G(L_1) &=[ A(1,1)-A(0,0)]L_1+\sum_{k \in \Z^*}\frac{k-1}{k}A(0,k)L_{k+1}+\sum_{k \in \Z}B \left ( 0,k+\frac{1}{2} \right ) I_{k+\frac{3}{2} },\\[10pt]
    G\left ( I_{\frac{1}{2} } \right ) &=-\frac{1}{2}[ A(0,0)-A(1,1)]I_{\frac{1}{2}}-\sum_{k \in \Z^*}\frac{A(0,k)}{2k}I_{k+\frac{1}{2}}+B\left ( 0,-\frac{1}{2} \right )\mathbf{l}.  
\end{align*}

Define a quasi-derivation $H$  as follows:
\[
H:=A(0,0)id_{\mathcal{D}}+ \left [ Y\left ( \frac{1}{2},\frac{1}{2}\right ) -\frac{1}{2}  A\left ( 0,0 \right )-\frac{1}{2} A\left ( 1,1 \right )  \right ] \tau,
\]
where $\tau$ is defined before Theorem \ref{1.1}.
Then, we have the following facts:
\begin{align*}
    H\left(L_0\right)&=A\left( 0,0 \right)L_0,\\[10pt]
    H\left(L_1\right)&=A\left( 0,0 \right)L_1,\\[10pt]
    H\left ( I_{\frac{1}{2} } \right ) &= \left [ Y\left ( \frac{1}{2},\frac{1}{2}\right ) +\frac{1}{2}  A\left ( 0,0 \right )-\frac{1}{2} A\left ( 1,1 \right )  \right ] I_{\frac{1}{2} }.
\end{align*}

Now, perform the replacement:
\[ f \mapsto f-\left(G+H\right).\]
Then, the coefficients of $f$ can be assumed to satisfy
\begin{equation}\label{coef}
A(0,k)=A(1,1)=0, \quad\quad B\left ( 0,k+\frac{1}{2}  \right ) =0, \quad\quad Y\left ( \frac{1}{2}, \frac{1}{2}   \right ) =0
\end{equation}
for all integers $k\in\Z$.

Step 2: 
In this step, 
it will be shown that under the hypothesis of  \eqref{coef}, $f\in\mathrm{Ann}(\mathcal{D})$.

Substituting $(x,y)=(L_{m},L_{n})$ into Equation \eqref{dq}.
Comparing the coefficients of the basis elements $L_k$ gives the equation
\begin{equation}\label{qd1}
   \left ( k-2n \right ) A\left ( m,k-n \right )+ \left ( 2m-k \right ) A\left ( n,k-m \right ) =\left ( m-n \right ) A'\left (m+n,k \right )+\frac{m^3-m}{12} \delta _{m+n,0}P'(k).
\end{equation}
Comparing coefficients of the basis elements $I_k$ yields 
\begin{align}
    &\left ( k-n+\frac{1}{2} \right )B\left( m,k-n+\frac{1}{2} \right)-\left ( k-m+\frac{1}{2} \right )B\left ( n,k-m+\frac{1}{2} \right ) \notag\\ 
    =&\,\left ( m-n \right ) B'\left ( m+n,k+\frac{1}{2} \right )+\frac{m^3-m}{12}\delta _{m+n,0}Q'\left ( k+\frac{1}{2}  \right ). \label{qd2}   
\end{align}
Comparing coefficient of the basis element $\mathbf{c}$ gives
\begin{equation}\label{qd3}
   \left ( n-n^3 \right ) A\left ( m,-n \right ) +\left ( m^3-m \right ) A\left ( n,-m \right ) =12\left ( m-n \right ) C' (m+n) +\left ( m^3-m  \right )\delta_{m+n,0}E'.
\end{equation}
Finally, comparing the coefficient of the basis element $\mathbf{l}$ yields the equation
\begin{equation}\label{qd4}
    12\left ( n-m \right )  D'\left ( m+n \right )=\left ( m^3-m \right )\delta_{m+n,0}F'.
\end{equation}

Substituting $m=0, n\ne 0$ and $\left ( m,n \right ) =\left ( 1,-1 \right ) $ into Equation \eqref{qd4} yields 
\[
D' (m)=0 \quad \text{for all}\quad m \in \Z.
\]
Subsequently, we have $F'=0$.

Now,
we consider Equations \eqref{qd2} and \eqref{qd3}.
In fact,
Equations \eqref{qd2} and \eqref{qd3} are identical to Equations (9) and (10) in \cite[Theorem 16]{KKS}.
We also have 
\[
A(1,1)=A(0,k)=0 \quad \text{for all}\quad k \in \Z.
\]
From the proof of \cite[Theorem 16]{KKS},
we obtain
\[
A(m,n)=A'(m,n)=P'(k)=C'(k)=E'=0 \quad \text{for all}\quad m, n, k \in \Z.
\]

A second substitution, namely $(x,y)=(L_{m},I_{n+\frac{1}{2} })$, produces another system:
\begin{align}
    \left ( k-2m \right ) X\left ( n+\frac{1}{2}, k-m  \right ) =&\left ( n+\frac{1}{2}  \right ) X'\left ( m+n+\frac{1}{2},k  \right ) ;  \label{qd5}
    \\[10pt]
    \left ( k-m+\frac{1}{2}  \right ) Y\left ( n+ \frac{1}{2}, k-m+\frac{1}{2}  \right )=&\left ( n+\frac{1}{2}  \right )Y'\left ( m+n+\frac{1}{2},k+\frac{1}{2}   \right );   \label{qd6} \\[10pt]
    \left ( m-m^3 \right ) X\left ( n+\frac{1}{2}, -m \right ) =&\;12\left ( n+\frac{1}{2}  \right ) Z'\left ( m+n+\frac{1}{2}  \right ); \label{qd7} \\[10pt]
    \left ( n+\frac{1}{2}  \right ) B\left ( m,-n-\frac{1}{2}  \right ) =&\left ( n+\frac{1}{2}  \right ) W'\left ( m+n+\frac{1}{2}  \right ). \label{qd8}
\end{align}

From Equation \eqref{qd8}, we obtain 
\begin{equation}\label{qd8-1}
B\left ( m,-n-\frac{1}{2} \right ) =W'\left ( m+n+\frac{1}{2} \right ).    
\end{equation}
Combined with \eqref{coef} yields
\[
B\left ( m,n+\frac{1}{2} \right )=B \left ( 0,n-m+\frac{1}{2}  \right )  =0\quad \text{for all}\quad m, n \in \Z.
\]
Equation \eqref{qd2} simplifies to
\begin{equation}\label{qd2-1}
     \left ( m-n \right ) B'\left ( m+n,k+\frac{1}{2}  \right )+\frac{m^3-m}{12}\delta_{m+n,0}Q'\left ( k+\frac{1}{2} \right )=0.
\end{equation}
Substituting $(m,n)=(2,-2)$ and $(m,n)=(4,-4)$ into Equation \eqref{qd2-1} gives
\[
    8B'\left ( 0,k+\frac{1}{2}  \right )+Q'\left ( k+\frac{1}{2} \right )=
    8B'\left ( 0,k+\frac{1}{2}  \right )+5Q'\left ( k+\frac{1}{2} \right )=0.
\]
Hence $Q'\left ( k+\frac{1}{2} \right )=B'\left(0,k+\frac{1}{2}\right )=0$ for $k \in \Z$.
By \eqref{qd2-1}, we get
\[
B'\left ( m,n+\frac{1}{2} \right )=0\quad \text{for all}\quad m, n \in \Z.
\]

Taking $m=0$ in Equation  \eqref{qd7},
we obtain
\[
Z' (m+\frac{1}{2})=0 \quad \text{ for all } m \in \Z.
\]
Subsequently, we get
\begin{equation}\label{qd7-1}
X\left(n+\frac{1}{2},m\right)=0 \quad \text{ for  } n \in \Z,\quad m \in \Z\setminus \left \{0, \pm1 \right \}.
\end{equation}
Taking $k=m+n$ in Equation \eqref{qd6},
we obtain
\[
\left ( n+\frac{1}{2}  \right ) Y\left ( n+\frac{1}{2}, n+\frac{1}{2}  \right )= \left ( n+\frac{1}{2}  \right ) Y'\left ( m+n+\frac{1}{2}, m+n+\frac{1}{2}  \right ),
\]
which implies 
\begin{equation}\label{qd6-1}
Y\left ( n+\frac{1}{2}, n+\frac{1}{2}  \right )=  Y'\left ( n+\frac{1}{2}, n+\frac{1}{2}  \right )= Y\left ( \frac{1}{2}, \frac{1}{2}  \right )=0 \quad \text{ for all } n \in \Z.
\end{equation}

 Finally, the substitution $(x,y)=(I_{m+\frac{1}{2} }, I_{n+\frac{1}{2} })$ into Equation \eqref{dq} gives 
\begin{align}
     \left ( m+\frac{1}{2}  \right ) \delta_{m+n+1,0}G' =&\;0;\label{qd11}\\[10pt]
   \left ( m+\frac{1}{2}  \right ) \delta_{m+n+1,0} R'\left ( k \right ) =&\;0;\label{qd9}\\[10pt]
   \left ( m+\frac{1}{2}  \right )\delta_{m+n+1,0}S'\left ( k+\frac{1}{2}  \right )=&\left ( m+\frac{1}{2} \right ) X\left ( n+\frac{1}{2}, k-m \right ) -\left ( n+\frac{1}{2}  \right ) X\left ( m+\frac{1}{2}, k-n \right );\label{qd10}\\[10pt]
   \left ( m+\frac{1}{2}  \right )\delta_{m+n+1,0}H'=&\left ( m+\frac{1}{2} \right ) Y\left ( n+\frac{1}{2}, -m-\frac{1}{2}\right )-\left ( n+\frac{1}{2}  \right ) Y\left ( m+\frac{1}{2}, -n-\frac{1}{2}  \right ).\label{qd12}
\end{align}
It is obvious that $G' =R'\left ( k \right ) =0 $ for $ k\in \Z$.
Substituting $n=-1-m$ into Equation \eqref{qd10} gives
\begin{equation}\label{qd10-5}
  \left ( m+\frac{1}{2} \right ) X\left ( -m-\frac{1}{2}, k-m \right ) +\left ( m+\frac{1}{2}  \right ) X\left ( m+\frac{1}{2}, k+m+1 \right ) 
   =\left ( m+\frac{1}{2}  \right )S'\left ( k+\frac{1}{2}  \right ).  
\end{equation}
Setting $m=0$  in Equation \eqref{qd10-5} and combined with relation \eqref{qd7-1} yield
\[
S'\left(k+\frac{1}{2}\right)=0 \quad \text{for}\quad k \in \Z\setminus \left \{-2, 0, \pm1 \right \}.
\]
Taking $m=8$ in Equation \eqref{qd10-5} together with relation \eqref{qd7-1} gives
\[
S'\left(k+\frac{1}{2}\right)=0 \quad \text{for}\quad k \in \Z\setminus \left \{-10, \pm9, \pm8, 7 \right \}.
\]
Hence, we conclude that
\[
S'\left(k+\frac{1}{2}\right)=0 \quad \text{ for all } k \in \Z.
\]

Equation \eqref{qd10} simplifies to 
\begin{equation}\label{qd10-1}
    \left ( m+\frac{1}{2} \right ) X\left ( n+\frac{1}{2}, k-m \right ) =\left ( n+\frac{1}{2}  \right ) X\left ( m+\frac{1}{2}, k-n \right ).
\end{equation}
Taking $(k,m) =(9n+4,9n+4)$ in Equation \eqref{qd10-1} and  combining with relation \eqref{qd7-1} give
\[
\left ( 9n+4+\frac{1}{2}  \right ) X\left (n+\frac{1}{2},0 \right)=\left ( n+\frac{1}{2}  \right )X\left ( 9n+4+\frac{1}{2},8n+4 \right) =0,
\]
which implies 
\[
X\left(n+\frac{1}{2},0\right)=0 \quad \text{for all}\quad n \in \Z.
\]
Similarly, taking $(k,m) =(9n+4,9n+5)$ and $(k,m) =(9n+4,9n+3)$ in Equation \eqref{qd10-1} and  combining with relation \eqref{qd7-1} yield
\[
X\left(n+\frac{1}{2},\pm1\right)=0 \quad \text{for all}\quad n \in \Z.
\]
Combining these results, we conclude that
\[
X\left(n+\frac{1}{2},m\right)=0 \quad \text{for all}\quad n, m \in \Z.
\]
According to Equation \eqref{qd5},
we also have 
\[
X'\left(n+\frac{1}{2},m\right)=0 \quad \text{for all}\quad n, m \in \Z.
\]

Substituting $n=-1-m$ into Equation \eqref{qd12} yields
\[
 \left ( m+\frac{1}{2}\right )H'=  \left ( m+\frac{1}{2}\right )Y \left ( m+\frac{1}{2},m+\frac{1}{2}\right )+\left ( m+\frac{1}{2}\right )Y\left ( -m-\frac{1}{2},-m-\frac{1}{2}\right ). 
\]
Together with relation \eqref{qd6-1},
this gives $H'=0$.
Equation \eqref{qd12} reduces to 
\begin{equation}\label{qd12-1}
 \left ( m+\frac{1}{2} \right ) Y\left ( n+\frac{1}{2}, -m-\frac{1}{2}\right )=\left ( n+\frac{1}{2}  \right ) Y\left ( m+\frac{1}{2}, -n-\frac{1}{2}  \right ).   
\end{equation}
Setting $m=0$ in Equation \eqref{qd6} yields
\begin{equation}\label{qd12-2}
 \left ( k+\frac{1}{2}  \right ) Y\left ( n+\frac{1}{2} ,k+ \frac{1}{2} \right ) = \left ( n+\frac{1}{2}  \right ) Y'\left ( n+\frac{1}{2} ,k+ \frac{1}{2} \right ).
\end{equation}
Applying \eqref{qd12-1} and \eqref{qd12-2}  to Equation \eqref{qd6},
we obtain 
\begin{equation}\label{qd-100}
\begin{aligned} 
&\left ( m+n+\frac{1}{2}  \right ) \left ( k-m+\frac{1}{2}  \right ) Y\left ( n+\frac{1}{2} ,k-m+ \frac{1}{2} \right ) \\[10pt]
=& \left (m+ n+\frac{1}{2}  \right ) \left ( n+\frac{1}{2}  \right ) Y'\left (m+ n+\frac{1}{2} ,k+ \frac{1}{2} \right )                           \\[10pt]
=& \left ( n+\frac{1}{2}  \right ) \left ( k+\frac{1}{2}  \right ) Y\left (m+ n+\frac{1}{2} ,k+ \frac{1}{2} \right )\\[10pt]
=&-\left ( n+\frac{1}{2}  \right ) \left[ -\left ( k+\frac{1}{2}  \right ) Y\left (m+ n+\frac{1}{2} ,k+ \frac{1}{2} \right ) \right]\\[10pt]
=&-\left ( n+\frac{1}{2}  \right )\left ( m+n+\frac{1}{2}  \right )Y\left ( -k-\frac{1}{2},-m-n-\frac{1}{2}  \right ). 
\end{aligned}
\end{equation}
Setting $k=-n-1$ in Equation \eqref{qd-100} yields
\[
mY\left ( n+\frac{1}{2},-m-n-\frac{1}{2}   \right ) =0,
\]
which implies
\[
Y\left ( n+\frac{1}{2},-m-\frac{1}{2}   \right ) =0 \quad \text{for}\quad m \ne n.
\]
Taking $(m,k)=(1,-n)$ in  Equation \eqref{qd-100} gives
\[
-\left ( n+\frac{3}{2} \right ) \left (n+\frac{1}{2}  \right ) Y\left (n+\frac{1}{2},-n-\frac{1}{2} \right )= -\left (n+\frac{1}{2}  \right )\left ( n+\frac{3}{2} \right )Y\left (n-\frac{1}{2},-n-\frac{3}{2} \right )=0.
\]
Hence we have
\[
Y\left ( n+\frac{1}{2},-n-\frac{1}{2}   \right ) =0 \quad \text{for all}\quad n \in \Z.
\]
Combining these results, we conclude that
\[
Y\left ( n+\frac{1}{2}, m+\frac{1}{2}   \right ) =0\quad \text{for all}\quad n, m \in \Z.
\]
Using this result in  Equation \eqref{qd12-2},
we obtain 
\[
Y'\left ( n+\frac{1}{2}, m+\frac{1}{2}   \right ) =0\quad \text{for all}\quad n, m \in \Z.
\]

To summarize, we have shown that
\[
 \begin{aligned}
      f(L_m)&=C(m)\mathbf{c} +D(m)\mathbf{l} , \\[10pt]
      f\left ( I_{m+\frac{1}{2} } \right ) &=Z\left ( m+\frac{1}{2}  \right ) \mathbf{c} +W\left ( m+\frac{1}{2}  \right )\mathbf{l} , \\[10pt]
      f(\mathbf{c} )&=E\mathbf{c}+F\mathbf{l},\\[10pt]
      f(\mathbf{l} )&=G\mathbf{c}+H\mathbf{l}.
 \end{aligned}
\]
Hence, we have $f \in \operatorname{Ann}(\mathcal{D})$ with the related map $f'=0$.
This completes the proof of Claim \ref{c1}.

\begin{claim}\label{c2}
The sum in Claim \ref{c1} is a direct sum.
\end{claim} 
Suppose for $x\in\mathcal{D}$,
    \begin{equation} \label{dscq1}
    \left [ \mathrm{ad}\left ( \sum_{j \in \Z} a_jL_j+ \sum_{k \in \Z}  b_{k+\frac{1}{2} }I_{k+\frac{1}{2}}\right) +t_1 \tau +t_2id_{\mathcal{D}}+h \right ](x)=0,
     \end{equation}
where $\tau$ is the outer derivations introduced before Theorem \ref{1.1}, and $h \in \operatorname{Ann}(\mathcal{D})$.

Substituting $x=I_{m+\frac{1}{2} }$ into Equation \eqref{dscq1} yields 
\[
-\sum_{j \in \Z} \left (m+\frac{1}{2}\right )a_j  I_{j+m+\frac{1}{2} }-\left (m+\frac{1}{2}\right )b_{-m-\frac{1}{2} }\mathbf{l}+(t_1+t_2)I_{m+\frac{1}{2}}+h\left (I_{m+\frac{1}{2}}  \right ) =0.
\]
Note that $h\left (I_{m+\frac{1}{2}}  \right ) \in \C\mathbf{c}\oplus\C\mathbf{l}$.
Comparing the coefficients of $I_{j+m+\frac{1}{2} }$,
we have $t_1+t_2=0$ and $a_{j}=0$ for $j \in \Z$.

Setting $x=L_{m}$ in Equation \eqref{dscq1} gives 
\[
\sum_{k \in \Z}\left ( k+\frac{1}{2}  \right ) b_{k+\frac{1}{2} } I_{k+m+\frac{1}{2} } +t_2L_m+h\left ( L_m \right )=0.
\]
Comparing the coefficients  of $ I_{k+m+\frac{1}{2} }$ and $L_m$,
we have 
\[
t_2=0\quad \text{and} \quad b_{k+\frac{1}{2} }=0 \quad \text{for } \quad k \in \Z.
\]
It is also obvious that $h=0$.
This completes the proof of Claim \ref{c2}.
\end{proof}
\end{thm}

The above theorem helps to determine all the $\delta$-(bi)derivations of $\mathcal{D}$.  
\begin{cor}\label{t-dd}
\[
\mathfrak{Der}_{\delta}(\mathfrak{D}) = \left\{
\begin{aligned}
&\operatorname{Inn}(\mathcal{D})\oplus \mathbb{C}\tau\qquad\quad\qquad \delta=1;\\
&\C id_{\mathcal{D}} \qquad\qquad\qquad\quad\;\;\,\,\,\,\delta=\frac{1}{2};  \\
&0 \qquad\qquad\qquad\qquad\quad\;\text{otherwise}.
\end{aligned}
\right.
\] 
\end{cor}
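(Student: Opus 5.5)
We plan to derive the corollary from Theorem \ref{dsc of q}. A $\delta$-derivation $f$ with $\delta\neq 0$ satisfies $(\delta f,\delta f,f)\in\Delta(\mathcal{D})$, so $\delta f\in\mathrm{Q}\mathfrak{Der}(\mathcal{D})$ and hence $f\in\mathrm{Q}\mathfrak{Der}(\mathcal{D})$. For $\delta=0$ the defining condition reads $f([x,y])=0$ for all $x,y$. Since $[\mathcal{D},\mathcal{D}]=\mathcal{D}$ (indeed $L_m=\frac{1}{m}[L_m,L_0]$ for $m\neq0$, $L_0=\frac12[L_1,L_{-1}]$, $I_r$ is obtained from $[L_0,I_r]$, and $\mathbf{c},\mathbf{l}$ arise from the central terms), this forces $f=0$. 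So from now on we take $\delta\neq0$ and write
\[
f=\mathrm{ad}(u)+t_1\tau+t_2\,id_{\mathcal{D}}+h,\qquad u\in\mathcal{D},\ h\in\mathrm{Ann}(\mathcal{D}).
\]

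Next we substitute this form into $\delta\bigl([f(x),y]+[x,f(y)]\bigr)=f([x,y])$. Since $D:=\mathrm{ad}(u)+t_1\tau$ is a derivation and $h$ takes values in the center, the left-hand side equals $\delta(1+2t_2)[x,y]\cdot$ in the following sense: it is $\delta\bigl(D([x,y])+2t_2[x,y]\bigr)$. The right-hand side is $D([x,y])+t_2[x,y]+h([x,y])$. Using $[\mathcal{D},\mathcal{D}]=\mathcal{D}$, we obtain the identity
\[
(\delta-1)D(z)+(2\delta-1)t_2 z=h(z)\qquad\text{for all } z\in\mathcal{D}.
\]

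We then compare coefficients modulo the center, using the direct-sum decomposition from Claim \ref{c2}. The operator $(\delta-1)D+(2\delta-1)t_2\,id_{\mathcal{D}}-h$ vanishes, so directness gives $(\delta-1)u=0$ modulo the center (which contributes nothing to $\mathrm{ad}$), $(\delta-1)t_1=0$, $(2\delta-1)t_2=0$, and $h=0$. This splits into three cases:
\begin{itemize}
\item If $\delta=1$, then $t_2=0$ and $h=0$, so $f\in\mathrm{Inn}(\mathcal{D})\oplus\C\tau$.
\item If $\delta=\frac12$, then $D=0$ and $h=0$, so $f\in\C\,id_{\mathcal{D}}$.
\item Otherwise every term vanishes and $f=0$.
\end{itemize}
Conversely, $id_{\mathcal{D}}$ is indeed a $\frac12$-derivation, since $\frac12\bigl([x,y]+[x,y]\bigr)=[x,y]$, and $\tau$ together with the inner derivations are derivations.

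The main subtlety is justifying the directness step. The element $h$ is defined on all of $\mathcal{D}$, including on $\mathbf{c}$ and $\mathbf{l}$, and $D(\mathbf{l})=2t_1\mathbf{l}$ contributes there as well. This is handled because Claim \ref{c2} already shows that the sum of these four spaces of endomorphisms is direct, so the vanishing of the combined operator forces each component to vanish.
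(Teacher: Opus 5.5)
Your proposal is correct and follows essentially the same route as the paper. You decompose $f$ via Theorem~\ref{dsc of q} and use $[\mathcal{D},\mathcal{D}]=\mathcal{D}$ to obtain $(\delta-1)D+(2\delta-1)t_2\,id_{\mathcal{D}}-h=0$, which matches the paper's $(1-\delta)f_1+(1-2\delta)f_2+f_3=0$, and you finish by the directness from Claim~\ref{c2}. Your separate treatment of $\delta=0$ and your check of the reverse inclusions are small details the paper leaves implicit; the stray expression $\delta(1+2t_2)[x,y]$ is a harmless slip that you correct in the very next clause.
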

\begin{proof}
Let $f$ be a $\delta$-derivation. Then $f$ is certainly a quasi-derivation. 
By Theorem~\ref{dsc of q}, we have the direct sum decomposition
\[
f = f_1 + f_2 + f_3,
\]
where $f_1 \in \C\tau \oplus \operatorname{Inn}(\mathcal{D})$, $f_2 \in \C id_{\mathfrak{D}}$, and $f_3 \in \operatorname{Ann}(\mathcal{D})$.
Note that $\mathcal{D} = [\mathcal{D}, \mathcal{D}]$.
Since $f$ is a $\delta$-derivation, $f = \delta f'$. 
Moreover, by the properties of quasi-derivations, we have $f' = f_1 + 2f_2$. Hence
$
f_1+f_2+f_3 = \delta(f_1+2f_2)
$,
which gives
$
(1-\delta)f_1 + (1-2\delta)f_2 + f_3 = 0
$.
\begin{itemize}
\item If $\delta = 1$, then the equation yields $f_2 = f_3 = 0$. Thus $f = f_1$, and consequently, $\mathfrak{Der}_{1}(\mathcal{D}) = \operatorname{Inn}(\mathcal{D}) \oplus \C\tau$.

\item If $\delta = \tfrac12$, then $f_1 = f_3 = 0$. Hence $f = f_2$ and
$
\mathfrak{Der}_{\frac{1}{2} }(\mathcal{D}) = \C id_{\mathcal{D}}
$.

\item If $\delta \neq 1,\tfrac12$, then $f_1 = f_2 = f_3 = 0$. Therefore $f = 0$, and we obtain
$
\mathfrak{Der}_{\delta}(\mathcal{D}) = 0$.\qedhere
\end{itemize}
\end{proof}
\begin{rmk}
There is a minor inaccuracy in the results for the derivations of the algebra $\mathcal{D}$ in \cite{ZC}.
\end{rmk}

From Corollary \ref{t-dd}, the algebra $\mathcal{D}$ admits no non-zero $\delta$-derivations if $\delta\ne1,\tfrac{1}{2}$.
Next, we consider the $\delta$-biderivations of $\mathcal{D}$.
\begin{defi}\cite[Definition 2.1]{YH}
\label{bider}
Let $\mathfrak{g}$ be a Lie algebra and $\delta\in\C$.
    A bilinear map $f: \mathfrak{g} \times \mathfrak{g}\rightarrow \mathfrak{g}$ is called a $\delta$-\textit{biderivation}, if for all $x, y, z \in \mathfrak{g}$, the following two identities hold:
    \begin{align}
    \label{biderequ1}
        f\left ( \left [ x,y \right ], z \right ) &=\delta\left [ x, f\left (  y,z\right )  \right ] +\delta\left [ f\left ( x,z \right ), y \right],\\
    \label{biderequ2}
       f\left ( x,\left [ y,z \right ]  \right ) &=\delta\left [ f\left ( x,y \right ),z  \right ] +\delta\left[ y,f\left ( x,z \right )  \right ].
    \end{align}
    Denote the space of $\delta$-biderivations by $\mathfrak{BD}_\delta(\mathfrak{g})$.
\end{defi}

\begin{cor}\label{d-bider}
    $\mathfrak{BD}_{\delta}(\mathcal{D})\ne 0$ if and only if $\delta=1$, and $\mathfrak{BD}_{1}(\mathcal{D})$ is determined in \cite[Theorem 4.1]{LC}.
\end{cor}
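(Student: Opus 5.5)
The natural route is to reduce each slot of a $\delta$-biderivation to a $\delta$-derivation and then apply Corollary~\ref{t-dd}. Fix $z\in\mathcal{D}$ and set $\varphi_z(x)=f(x,z)$. Identity \eqref{biderequ1} reads $\varphi_z([x,y])=\delta[x,\varphi_z(y)]+\delta[\varphi_z(x),y]$, so $\varphi_z\in\mathfrak{Der}_\delta(\mathcal{D})$. Likewise, by \eqref{biderequ2}, $\psi_x(y)=f(x,y)$ is a $\delta$-derivation for each fixed $x$.

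First let $\delta\ne 1,\tfrac12$. Corollary~\ref{t-dd} gives $\varphi_z=0$ for every $z$, hence $f=0$ and $\mathfrak{BD}_\delta(\mathcal{D})=0$.

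Next let $\delta=\tfrac12$. Corollary~\ref{t-dd} gives $f(x,z)=\lambda(z)x$ for a linear functional $\lambda$, and also $f(x,z)=\mu(x)z$ for a linear functional $\mu$. So $\lambda(z)x=\mu(x)z$ for all $x,z$. Taking $x=L_0$ and $z=L_1$, which are linearly independent, forces $\lambda(L_1)=\mu(L_0)=0$. More generally, for any $z$ choose $x$ linearly independent of $z$; comparing coefficients gives $\lambda(z)=0$. Hence $f=0$. This is the only step needing any care, and it is just the observation that a map which is a scalar multiple of the identity in both arguments must vanish on a space of dimension at least $2$.

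Finally let $\delta=1$. A $1$-biderivation is an ordinary biderivation, and a nonzero one exists: by \cite[Theorem 4.1]{LC}, or directly, the bilinear map $f(x,y)=[x,y]$ satisfies \eqref{biderequ1} and \eqref{biderequ2} by the Jacobi identity, and it is nonzero since $\mathcal{D}$ is non-abelian. Therefore $\mathfrak{BD}_1(\mathcal{D})\neq 0$, and its full description is the one given in \cite[Theorem 4.1]{LC}. This proves the equivalence.
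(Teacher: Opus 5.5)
Your proof is correct. For $\delta\ne 1,\tfrac12$ it is the paper's argument: fix one argument, note that the resulting map is a $\delta$-derivation, and apply Corollary~\ref{t-dd}. The routes differ at $\delta=\tfrac12$.

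The paper uses only one slot, $f(x,\cdot)=\lambda_x\,\mathrm{id}_{\mathcal{D}}$. It then invokes \cite[Proposition 3.1]{XCK}: since $\mathcal{D}$ is perfect and $\mathbf{c}$ is central, $f(x,\mathbf{c})=0$. Hence $\lambda_x\mathbf{c}=0$ and $\lambda_x=0$.

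You instead use both defining identities to get $f(x,z)=\lambda(z)x=\mu(x)z$, and conclude by linear independence. Your argument is self-contained and purely linear-algebraic. It uses nothing about the center or perfectness of $\mathcal{D}$ beyond what is already packed into Corollary~\ref{t-dd}. It would work verbatim for any Lie algebra of dimension at least $2$ whose $\tfrac12$-derivations are scalar multiples of the identity. The paper's version needs information about only one argument, but in exchange it relies on the nonzero center and an external result.

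You also make the $\delta=1$ direction explicit by exhibiting the bracket $f(x,y)=[x,y]$ as a nonzero biderivation via the Jacobi identity. The paper leaves the nonvanishing of $\mathfrak{BD}_1(\mathcal{D})$ implicit in its reference to \cite[Theorem 4.1]{LC}.
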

\begin{proof}
Let $f\in\mathfrak{BD}_\delta(\mathcal{D})$.
Suppose $\delta\ne1,\frac12$.
For $x\in\mathcal{D}$, the unary map $f(x,\cdot):\mathcal{D}\to\mathcal{D}$ is a $\delta$-derivation of $\mathcal{D}$. 
By Corollary \ref{t-dd}, we have $f=0$.
Suppose $\delta=\frac{1}{2}$.
For $x\in\mathcal{D}$, there exists $\lambda_x\in\C$ such that $f(x,\cdot)=\lambda_xid_{\mathcal{D}}$.
Since $[\mathcal{D}, \mathcal{D}]=\mathcal{D}$ and $[\mathcal{D},\mathbf{c}]=0$.
From \cite[Proposition 3.1]{XCK}, we deduce that $0=f(x,c)=\lambda_xc$, leading to $f=0$.
\end{proof}

In the second step, we calculate the quasi-centroid maps of $\mathcal{D}$.
\begin{prop}\label{quasi-centroid}
   $\mathrm{QC}(\mathcal{D}) =\C id_{\mathcal{D}}\oplus\mathrm{Ann}(\mathcal{D})$.
\begin{proof}
Let $g \in \mathrm{QC}(\mathcal{D})$. 
There exists $h\in\mathrm{Ann}(\mathcal{D})$ such that the actions of the quasi‑centroid map $f:= g - h$ on the basis elements are given by
\begin{align*}
    f\left ( L_m \right ) &=\sum_{k \in \Z}A\left ( m,k \right ) L_k + \sum_{k \in \Z}B\left ( m,k+\frac{1}{2}  \right ) I_{k+\frac{1}{2}}, \\[10pt]
    f\left ( I_{m+\frac{1}{2} } \right ) &=\sum_{k \in \Z}X\left ( m+\frac{1}{2} ,k \right ) L_k + \sum_{k+\frac{1}{2}  \in \Z}Y\left ( m+\frac{1}{2} ,k+\frac{1}{2}  \right ) I_{k+\frac{1}{2}}, \\[10pt]
    f\left ( \mathbf{c} \right ) &=A\left ( 0, 0 \right )\mathbf{c} ,  \qquad \qquad \qquad \qquad  f\left ( \mathbf{l} \right ) =A\left ( 0, 0 \right )\mathbf{l} . 
\end{align*}
By definition, for $x,y\in\mathcal{D}$,
we have 
\begin{equation} \label{qcd}
[f(x), y]=[x, f(y)].
\end{equation}

First, substitute $(x,y) = (L_m,L_n)$ into Equation \eqref{qcd} yields the system
\begin{align}
   \left (k-2n \right )A\left ( m,k-n \right )  &=\left (2m-k\right )A\left ( n,k-m \right ), \label{qc-1}  \\[10pt]
   \left ( n-n^3 \right ) A\left ( m,-n \right ) &=\left ( m^3-m \right ) A\left ( n,-m \right ), \label{qc-2}  \\[10pt]
   \left ( n-k-\frac{1}{2} \right ) B\left ( m,k-n+\frac{1}{2} \right )  &=\left ( k-m+\frac{1}{2} \right ) B\left ( n,k-m+\frac{1}{2} \right ). \label{qc-3}
\end{align}
Taking $k=m+n$ in Equation  \eqref{qc-1} gives $(m-n)A(m,m) = (m-n)A(n,n)$, hence
\[
A(m,m) = A(0,0) \quad \text{for all}\quad m\in\Z.
\]
Setting $m=n$ in Equation  \eqref{qc-1} yields $(k-2n)A(n,k-n)=0$, from which we obtain
\[
A(m,n) = 0 \quad \text{whenever } m\neq n.
\]
Thus, for $A(m,n)$ we have
$
A(m,n) = \delta_{m,n}A(0,0)
$.
Taking $m=n$ in Equation \eqref{qc-3} gives 
\[
B\left(m,n+\frac12\right) = 0 \quad \text{for all}\quad m,n\in\Z.
\]

Next, substituting $(x,y) = \left(L_m, I_{n+\frac{1}{2}} \right)$ into  into Equation \eqref{qcd} gives the following equations:
\begin{align}
   \left ( 2m-k \right ) X\left ( n+\frac{1}{2}, k-m  \right ) &=0 \label{qc-4}  \\[10pt]
   \left ( m^3-m \right ) X\left ( n+\frac{1}{2}, -m  \right ) &=0, \label{qc-5}  \\[10pt]
   \left ( k-m+\frac{1}{2}  \right ) Y\left ( n+\frac{1}{2},k-m+\frac{1}{2} \right ) &=\left ( n+\frac{1}{2}  \right )A\left ( m,k-n \right ). \label{qc-6}
\end{align}
By setting $k=1$ in Equation \eqref{qc-4}, we see that
\[
X\left ( m+\frac{1}{2},n \right ) =0 \quad \text{for all}\quad  m,n \in \Z.
\]
Furthermore, 
from \eqref{qc-6}, we obtain
\[
Y\left ( m+\frac{1}{2},n+\frac{1}{2} \right ) =\delta_{m,n}A\left ( 0, 0 \right ) \quad \text{for all }  m,n \in \Z.
\]
Consequently, $f = A(0,0)\,id_{\mathcal{D}}$.
Therefore
$\mathrm{QC}(\mathcal{D}) =\C id_{\mathcal{D}}\oplus\mathrm{Ann}(\mathcal{D})$.
\end{proof}
\end{prop}

Proposition~\ref{quasi-centroid} immediately yields
$
\mathrm{G}\mathfrak{Der}(\mathcal{D})=\mathrm{Q}\mathfrak{Der}(\mathcal{D})
$.
Combining this with Equation~\eqref{gqq}, we conclude that every generalized derivation is thus a quasi-derivation.

\section{Local derivations}\label{sec4}
Suppose $\Delta$ is a local derivation and $ \Delta(L_0)=D_0(L_0)$ for some derivation $D_0$.
By replacing $\Delta$ with $\Delta-D_0$,
we may assume that $\Delta(L_0)=0$.
Fix $m \ne 0$,
since $L_m$ does not produce $\mathbf{l}$ under any derivation, 
we set
\begin{equation}\label{L-1}
\Delta(L_m)=\sum_{n \in \mathbb{Z} }\left ( a_nL_n+b_{n+\frac{1}{2}}I_{n+\frac{1}{2}} \right )+Z_1(m)\mathbf{c},
\end{equation}
where $a_n, b_{n+\frac{1}{2}}, Z_1(m) \in \C$ for $m, n \in \Z$.

For any integer $0 \le i \le m-1$,
define $\overline{i}:=\left \{ i+km \mid k \in \Z \right \} $.
It follows that
$$
\Z=\overline{0} \cup \overline{1}\cup \cdots \cup \overline{m-1}
$$
and Equation \eqref{L-1} can be rewritten as follows:
\begin{equation}\label{L-2}
\Delta(L_m)=\sum_{i  \in E} \sum_{k=s_i}^{t_i}a_{i+km}L_{i+km}+\sum_{i  \in F} \sum_{k=p_i}^{q_i}b_{i+km+\frac{1}{2} }I_{i+km+\frac{1}{2} }+Z_1(m)\mathbf{c},
\end{equation}
where  $E, F \subset \left \{ 0,1,2,\cdots,m-1 \right \}$.

For any $x \in \C^*$,
on the one hand,
since $\Delta\left ( L_0 \right ) =0$,
we have 
\[
\Delta\left ( L_m \right ) =\Delta\left ( L_m +xL_0\right ).
\]
On the other hand, 
by Corollary~\ref{t-dd}, 
since the outer derivation $\tau$ annihilates $L_m+xL_0$, 
there exists an element
\[
\sum_{i  \in E'}\sum_{k=s_i'}^{t_i'}a_{i+km}'L_{i+km}+\sum_{i  \in F'} \sum_{k=p_i'}^{q_i'}b_{i+km+\frac{1}{2} }'I_{i+km+\frac{1}{2} } \in \mathcal{D},
\]
where $E', F' \subset \left \{ 0,1,2,\cdots,m-1 \right \}$, such that 
\begin{align}
&\Delta (L_m+xL_0)  \notag  \\[10pt]
=&\left [\sum_{i  \in E'}\sum_{k=s_i'}^{t_i'}a_{i+km}'L_{i+km}+\sum_{i  \in F'} \sum_{k=p_i'}^{q_i'}b_{i+km+\frac{1}{2} }'I_{i+km+\frac{1}{2} }, L_m+xL_0\right ]  \notag   \\[10pt]
=&\frac{m-m^3}{12} a_{-m}\mathbf{c}+\sum_{i \in E'}\left [\sum_{k=s_i'}^{t_i'}a_{i+km}'(i+(k-1)m)L_{i+(k+1)m}+\sum_{k=s_i'}^{t_i'}a_{i+km}'x(i+km)L_{i+km} \right] \notag   \\[10pt]
&+\sum_{i  \in F'}\sum_{k=p_i'}^{q_i'}b_{i+km+\frac{1}{2} }'\left ( i+km+\frac{1}{2}  \right ) I_{i+(k+1)m+\frac{1}{2}}+\sum_{i  \in F'}\sum_{k=p_i'}^{q_i'}b_{i+km+\frac{1}{2} }'x\left ( i+km+\frac{1}{2}  \right ) I_{i+km+\frac{1}{2}}
 \notag   \\[10pt]
=&\frac{m-m^3}{12} a_{-m}\mathbf{c}+\sum_{i  \in E'}\sum_{k=s_i'}^{t_i'+1} \left[ a_{i+(k-1)m}' \left [ i+(k-2)m  \right ]+ a_{i+km}'x(i+km) \right ] L_{i+km} \notag   \\[10pt]
&+\sum_{i  \in F'}\sum_{k=p_i'}^{q_i'+1}\left [ b_{i+(k-1)m+\frac{1}{2} }' \left ( i+(k-1)m+\frac{1}{2}   \right ) + xb_{i+km+\frac{1}{2} }'\left ( i+km+\frac{1}{2} \right )   \right ] I_{i+km+\frac{1}{2}}, \label{L-3}
\end{align}
where $a_{i+(s_i'-1)m}'=a_{i+(t_i'+1)m}' =b_{i+(p_i'-1)m+\frac{1}{2} }'= b_{i+(q_i'+1)m+\frac{1}{2} }'=0 $.
Finally, comparing the coefficients in   Equations \eqref{L-2} and \eqref{L-3} yields $E=E'$ and $F=F'$.

\begin{lemma}\label{lem-L-1}
Let $\Delta$ be a local derivation and $\Delta(L_0)=0$.
Then $E, F \subset \left \{ 0 \right \}$ in Equation \eqref{L-2}.
\begin{proof}
Assume that $i \ne 0$,
we have $i+km \ne 0$ for all $k \in \Z$.
Comparing the coefficients of $L_{i+km}$ in Equations \eqref{L-2} and \eqref{L-3} yields
\begin{equation}\label{Equ-L-1}
\sum_{k=s_i}^{t_i}a_{i+km}L_{i+km}=\sum_{k=s_i'}^{t_i'+1} \left[ a_{i+(k-1)m}' \left [ i+(k-2)m  \right ]+ a_{i+km}'x(i+km) \right ] L_{i+km}.
\end{equation}
We may assume that $a_{i+s_im} \ne 0$ and $a_{i+t_im} \ne 0$;
then clearly $a_{i+s_i'm}'\ne 0$ and $a_{i+t_i'm}' \ne 0$.
The leading term (i.e., the term with the largest index $k$) on the left hand of \eqref{Equ-L-1} is
\[
a_{i+t_im}L_{i+t_im},
\]
while the leading term  on the right hand of \eqref{Equ-L-1} is 
\[
a_{i+t_i'm}'\left [ i+\left ( t_i'-1 \right )  m\right ]  L_{i+(t_i'+1)m}.
\]
Matching the indices of these leading terms yields $t_i=t_i'+1$.
Similarly, comparing the lowest terms (those with the smallest index $k$) gives $s_i=s_i'$.

Equations \eqref{L-2} now yields the following  relations:
\begin{align*}
    a_{i+s_im} &=xa_{i+s_i m}' \left ( i+s_i m \right ); \\[10pt]
    a_{i+\left ( s_i+1 \right ) m} &=a_{i+s_im}' \left [ i+ \left ( s_i-1 \right ) m \right ]+xa_{i+\left ( s_i+1 \right ) m} ' \left [ i+\left ( s_i+1 \right ) m  \right ]; \\[10pt]
    a_{i+\left ( s_i+2 \right ) m} &=a_{i+\left ( s_i+1 \right ) m} '\left [ i+  s_i m \right ]+xa_{i+\left ( s_i+2 \right ) m} ' \left [ i+\left ( s_i+2 \right ) m  \right ]; \\[10pt]
    &\vdots \\[10pt]
    a_{i+\left ( t_i-2 \right ) m} &=a_{i+\left ( t_i-3 \right ) m} '\left [ i+  \left ( t_i -4 \right ) m \right ]+xa_{i+\left ( t_i-2 \right ) m}'  \left [ i+\left ( t_i-2 \right ) m  \right ] ; \\[10pt]
    a_{i+\left ( t_i-1 \right ) m} &=a_{i+\left ( t_i-2 \right ) m} '\left [ i+  \left ( t_i -3 \right ) m \right ]+xa_{i+\left ( t_i-1 \right ) m}'  \left [ i+\left ( t_i-1 \right ) m  \right ] ; \\[10pt]
    a_{i+t_im} &=a_{i+\left ( t_i-1 \right ) m} '\left [ i+  \left ( t_i -2 \right ) m \right ].
\end{align*}
Since $i+km \ne 0$ for $k \in \Z$,
we may successively eliminate the primed coefficients
\[
 a_{i+s_i m} ',\cdots, a_{i+\left (t_i-1 \right )m'} 
\]
by substitution in this order. 
This yields
\begin{equation}\label{Loc-4}
    a_{i+t_im} +c_1 x^{-1} +c_2 x^{-2} +\cdots+c_{t_i-s_i} x^{-(t_i-s_i)}=0,
\end{equation}
where each $c_j$ are independent of $x$.
We can always find some $x \in \C$ not satisfying Equation \eqref{Loc-4},
which is a contradiction.
Therefore $E \subset \left \{ 0 \right \}$.
Applying the same argument to $I_{i+km}$  gives $F \subset \left \{ 0 \right \}$.
The lemma follows.  
\end{proof}  
\end{lemma}

Lemma \ref{lem-L-1} yields the identity
\begin{equation}\label{Loc-e1}
\Delta(L_m)=\sum_{k\in\mathbb{I}}a_{mk}L_{mk}+\sum_{k\in\mathbb{J}}b_{mk+\frac{1}{2}}I_{mk+\frac{1}{2}}+Z_1(m)\mathbf{c}.
\end{equation}

\begin{lemma}\label{lem-L-2}
With the notation of Equation \eqref{Loc-e1}, 
we have 
$\mathbb{I}=\left \{ 1 \right \}  $, $\mathbb{J} =\emptyset $, and $Z_1(m)=0$.
\begin{proof}
    Combining Equations\eqref{L-2}, \eqref{L-3}, and \eqref{Loc-e1} yields
    \begin{equation}\label{L-6}
        \sum_{k=s}^{t}a_{km}L_{km}=\sum_{k=s'}^{t'+1} \left[ a_{(k-1)m}' \left [ (k-2)m  \right ]+ a_{km}'xkm \right ] L_{km},
    \end{equation}
    where $a_{\left ( s'-1 \right )m } '=a_{\left ( t'+1 \right )m } '=0$.
    We may assume that $a_{sm} \ne 0$ and $a_{tm} \ne 0$; then necessarily 
   $a_{s'm}'\ne 0$ and $a_{t'm}' \ne 0$.
    Clearly, this implies the index bounds
    \[
    s' \le s\le t \le t'+1.
    \]
    Applying the same argument as in Lemma 3.2 in \cite{WGL},
    we obtain $s=t=1$ and $\mathbb{I}=\left \{ 1 \right \}$.
 
Similarly, applying Equations \eqref{L-2}, \eqref{L-3}, and \eqref{Loc-e1} yields
 \begin{equation}\label{L-6-1}
        \sum_{k=p}^{q}b_{km+\frac{1}{2} }I_{km+\frac{1}{2} }=\sum_{k=p'}^{q'+1}\left [ b_{(k-1)m+\frac{1}{2} }' \left ( (k-1)m+\frac{1}{2}   \right ) + xb_{km+\frac{1}{2} }'\left ( km+\frac{1}{2} \right )   \right ] I_{km+\frac{1}{2}},
 \end{equation} 
where $b_{\left ( p'-1 \right )m+\frac{1}{2} } '=b_{\left ( q'+1 \right )m +\frac{1}{2}} '=0$.
We may assume that $b_{pm+\frac{1}{2}},b_{qm+\frac{1}{2}},b_{p'm+\frac{1}{2}}',b_{q'm+\frac{1}{2}}' \ne 0$.
Since $ (k-1)m+\frac{1}{2} \ne 0 $ for all $k \in \Z$,
the highest term on the left hand of Equation \eqref{L-6-1} is
\[
b_{qm+\frac{1}{2} }I_{qm+\frac{1}{2} },
\]
while the highest term on the right-hand side is
\[
\left [ b_{q'm+\frac{1}{2} }' \left ( q'm+\frac{1}{2}   \right )  \right ] I_{\left ( q'+1 \right ) m+\frac{1}{2}}.
\]
Hence we have $q=q'+1$.
Similarly, comparing the lowest terms (those with the smallest index $k$) gives $p=p'$.
Equation \eqref{L-6-1} then yields the following system:
\begin{align*}
    b_{pm+\frac{1}{2} } &= xb_{p m+\frac{1}{2} }' \left ( p m +\frac{1}{2} \right ); \\[10pt]
    b_{\left ( p+1 \right ) m+\frac{1}{2} } &= b_{p m}' \left [ p m+\frac{1}{2}  \right ] + xb_{\left ( p+1 \right ) m+\frac{1}{2} }' \left [ \left ( p+1 \right ) m +\frac{1}{2} \right ]; \\[10pt]
    b_{\left ( p+2 \right ) m+\frac{1}{2} } &= b_{\left(p+1\right)m+\frac{1}{2} }' \left[ \left(p+1\right)m+\frac{1}{2}  \right] + xb_{\left(p+2\right)m+\frac{1}{2} }' \left[ \left( p+2 \right)m +\frac{1}{2} \right]; \\[10pt]
    &\vdots \\[10pt]
    b_{\left ( q-2 \right ) m+\frac{1}{2} } &= b_{\left(q-3\right)m+\frac{1}{2} }' \left[ \left(q-3\right)m +\frac{1}{2} \right] + xb_{\left(q-2\right)m+\frac{1}{2} }' \left[ \left(q-2\right)m+\frac{1}{2}  \right] ; \\[10pt]
    b_{\left ( q-1 \right ) m+\frac{1}{2} } &= b_{\left(q-2\right)m+\frac{1}{2} }' \left[ \left(q-2\right)m +\frac{1}{2} \right] + xb_{\left(q-1\right)m+\frac{1}{2} }' \left[ \left(q-1\right)m +\frac{1}{2} \right] ; \\[10pt]
    b_{q m+\frac{1}{2} } &= b_{\left(q-1\right)m+\frac{1}{2} }' \left[ \left(q-1\right)m \right].
\end{align*}
We now successively eliminate
\[
 b_{p m+\frac{1}{2} } ',\cdots, b_{q m+\frac{1}{2}} '
\]
by substitution in this order. 
This yields
\[
b_{q m+\frac{1}{2} }-b_{(q-1) m+\frac{1}{2} }x^{-1} +b_{(q-2) m+\frac{1}{2} }x^{-2}+\cdots +(-1)^{p-q} b_{p m+\frac{1}{2} }x^{-(p-q)} =0
\]
with $x \in \C^*$.
which is impossible.
Hence $F=\emptyset$.

Since $s=t=1$,
we have $a_{-m}=0$.
Again, using Equations \eqref{L-2}, \eqref{L-3}, and \eqref{Loc-e1}, 
we obtain
 \begin{equation}\label{L-6-2}
        Z_1(m)\mathbf{c}=\frac{m-m^3}{12} a_{-m}\mathbf{c} =0.
    \end{equation}   
This completes the proof of the lemma.  
\end{proof} 
\end{lemma}

Lemma \ref{lem-L-1} and Lemma  \ref{lem-L-2}  show that 
\begin{equation}\label{L-7}
    \Delta(L_m)=a_{m}L_{m}.
\end{equation}

\begin{lemma}\label{lem-L-3}
Let $\Delta$ be a local derivation with $\Delta(L_0)=\Delta(L_1)=0$.
Then $\Delta(L_m)=0$ for all $m \in \Z$.
\begin{proof}
    For a fixed $m \in \Z\setminus \left \{ 0,1 \right \} $.
    By Equation \eqref{L-7} and Corollary \ref{t-dd},
    there exist $a_m \in \Z$ and an element 
    \[
   \sum_{i=r}^{s} a_{i}'L_{i}+\sum_{j=p}^{q}b_{j+\frac{1}{2}}'I_{j+\frac{1}{2}} \in \mathcal{D},
    \]
    such that 
    \begin{equation}\label{lem-L-31}
       a_mL_m=\Delta (L_m)=\Delta (L_m+L_1),
    \end{equation}
    and
    \begin{align}
        &\Delta (L_m+L_1) \notag \\[10pt]
       =&\left [ \sum_{i=r}^{s} a_{i}'L_{i}+\sum_{j=p}^{q}b_{j+\frac{1}{2}}'I_{j+\frac{1}{2}}, L_m+L_1 \right ] \notag\\[10pt]
       =&\sum_{i=r}^{s} a_{i}'(i-m)L_{i+m}   +\sum_{i=r}^{s}a_{i}' (i-1)L_{i+1}  +\frac{m-m^3}{12} a_{-m}'\mathbf{c}\notag\\[10pt]
       &+\sum_{j=p}^{q}b_{j+\frac{1}{2}}'\left ( j+ \frac{1}{2}\right ) I_{j+m+\frac{1}{2}} +\sum_{j=p}^{q}b_{j+\frac{1}{2}}'\left ( j+ \frac{1}{2}\right ) I_{j+1+\frac{1}{2}} \label{lem-L-32}.
    \end{align}
Assume, for contradiction, that
\[
b_{p+\frac{1}{2}}', b_{q+\frac{1}{2}}' \ne 0.
\]
For $m>1$,
the coefficients of $I_{q+m+\frac{1}{2}}$ in Equation \eqref{lem-L-32} is
\[
b_{q+\frac{1}{2}}'\left ( q+\frac{1}{2} \right ),
\]
whereas in Equation \eqref{lem-L-31} it is zero, a contradiction.
For $m<0$,
the coefficients of $I_{q+1+\frac{1}{2}}$ in Equation \eqref{lem-L-32} is
\[
b_{q+\frac{1}{2}}'\left ( q+\frac{1}{2} \right ),
\]
while it is zero in Equation \eqref{lem-L-31}, again a contradiction.
Therefore,
we have 
\begin{equation}\label{lem-L-32-1}
\sum_{j=p}^{q} b_{j+\frac{1}{2}}' I_{j+\frac{1}{2}} = 0.
\end{equation}

Comparing the coefficients  of $L_k$ in Equations  \eqref{lem-L-31} and  \eqref{lem-L-32} yields 
\begin{equation}\label{lem-L-33}
    a_mL_m=\sum_{i=r}^{s} a_{i}'(i-m)L_{m+i}+\sum_{i=r}^{s} a_{i}'(i-1)L_{i+1}.
\end{equation}
Assume, for contradiction, that $a_m \ne 0$. 
Combining with Equations \eqref{lem-L-32} and \eqref{lem-L-32-1},
we have 
\[
\sum_{i=r}^{s}  a_i'  L_i \ne 0.
\]
It is obvious that $a_r', a_s' \neq 0$. 
Suppose $m > 1$.
Then, Equation \eqref{lem-L-33} can be rewritten as
\[
a_m  L_m =  a_r' (r-1)  L_{r+1} + \cdots +  a_s' (s-m)  L_{s+m}.
\]
We distinguish three cases. 
In the first case, if $a_r'(r-1) = a_s'(s-m) = 0$, then $r=1$ and $s=m$. Substituting $(r,s)=(1,m)$ into Equation \eqref{lem-L-33} gives
\[
a_m L_m = a_{m-1}'(m-2) L_m - a_{m-1}'L_{2m-1},
\]
which yields a contradiction since both $a_{m-1}'=0$ and $a_{m-1}'(m-2)\ne0$ are required. 
In the second case, 
if $a_r'(r-1) L_{r+1} = a_m L_m$ and $a_s'(s-m) L_{s+m}=0$, then $r=m-1$ and $s=m$.
It follows that $a_{m-1}', a_m' \ne 0$. 
Then Equation \eqref{lem-L-33}  becomes 
\[
a_m L_m = -a_{m-1}' L_{2m-1} + a_{m-1}'(m-2) L_m + a_m'(m-1) L_{m+1}, 
\]
which is impossible. 
In the third case, if $a_r'(r-1) L_{r+1}=0$ and  $a_s'(s-m) L_{s+m}=a_m L_m$, then $r=1$ and $s=0$, 
which is a contradiction.
Thus, our assumption $a_m \ne 0$ fails, and hence $a_m=0$ for all $m>1$.
The proof for the case $m<0$ is similar, completing the lemma.
\end{proof}
\end{lemma}

Using the fact that $\Delta(L_1)=a_1L_1$, 
set $\Delta_1=\Delta+a_1\operatorname{ad}(L_0)$. 
Then $\Delta_1$ is a local derivation with $\Delta_1(L_0)=\Delta_1(L_1)=0$. Lemma~\ref{lem-L-3} then implies $\Delta_1(L_m)=0$ for every $m\in\mathbb Z$.

\begin{lemma}\label{lem-L-4}
Let $\Delta$ be a local derivation such that $\Delta(L_m)=0$ for all $m\in\mathbb Z$. Then, there exists a constant $k\in\C$ such that
\[
\Delta(I_{m+\frac12})=kI_{m+\frac12}
\quad\text{for all}\quad m\in\mathbb Z.
\]
\begin{proof}
First, we prove that $\Delta(I_{m+\frac12})\in\mathbb C I_{m+\frac12}$.
From the definition of local derivations and the concrete forms of the derivations of $\mathcal{D}$, we have
\begin{equation}\label{lem-L-4-0}
\Delta(I_{m+\frac12})
=
\sum_{i=s_m}^{t_m} d_{i+\frac12} I_{i+\frac12}
+ f_m\mathbf l.
\end{equation}

Choose $p<0$ and $q>0$ such that $p+m<s_m$ and $q+m>t_m$. For $I_{m+\frac12}+L_p+L_q\in\mathcal{D}$, on the one hand,
\[
\Delta(I_{m+\frac12}+L_p+L_q)
=
\Delta(I_{m+\frac12})
=
\sum_{i=s_m}^{t_m} d_{i+\frac12} I_{i+\frac12}
+ f_m\mathbf l.
\]
On the other hand, there exist $d'\in\mathbb C$ and an element
\[
\sum_{i\in\mathbb I} a_i'L_i+\sum_{j\in\mathbb J} b_{j+\frac12}'I_{j+\frac12}\in\mathcal D
\]
such that
\[
\Delta(I_{m+\frac12}+L_p+L_q)
=
\left[
\sum_{i\in\mathbb I} a_i'L_i+\sum_{j\in\mathbb J} b_{j+\frac12}'I_{j+\frac12},
I_{m+\frac12}+L_p+L_q
\right]
+d'\tau(I_{m+\frac12}+L_p+L_q),
\]
where $\tau$ is the outer derivation defined in Corollary \ref{t-dd}. Hence, we have
\begin{equation}\label{lem-4-0}
\sum_{i=s_m}^{t_m} d_{i+\frac12} I_{i+\frac12}+f_m\mathbf l
=
\left[
\sum_{i\in\mathbb I} a_i'L_i+\sum_{j\in\mathbb J} b_{j+\frac12}'I_{j+\frac12},
I_{m+\frac12}+L_p+L_q
\right]
+d'I_{m+\frac12}.
\end{equation}
It is clear that $\sum_{i\in\mathbb I} a_i'L_i=a'(L_p+L_q)$ for some $a'\in\mathbb C$. Then
\begin{align}
\sum_{i=s_m}^{t_m} d_{i+\frac12} I_{i+\frac12}+f_m\mathbf l
={}&-\left(m+\frac12\right)b_{-m-\frac12}'\mathbf l
-\left(m+\frac12\right)a'I_{p+m+\frac12}
-\left(m+\frac12\right)a'I_{q+m+\frac12} \notag\\[6pt]
&+\sum_{j\in\mathbb J} b_{j+\frac12}'\left(j+\frac12\right) I_{p+j+\frac12}
+\sum_{j\in\mathbb J} b_{j+\frac12}'\left(j+\frac12\right) I_{q+j+\frac12}
+d'I_{m+\frac12}. \label{lem-4-1}
\end{align}

Now, we claim that $\mathbb J\subset\{m\}$ in \eqref{lem-4-1}. Let
\[
s'=\min\{ j\in\mathbb J\mid b_{j+\frac12}'\neq0\},
\qquad
t'=\max\{ j\in\mathbb J\mid b_{j+\frac12}'\neq0\}.
\]
If $s'<m$, then $p+s'<p+m<m$, hence in the right-hand side of \eqref{lem-4-1}, the coefficient of $I_{p+s'+\frac12}$ is
\[
b_{s'+\frac12}'\left(s'+\frac12\right).
\]
Since $p+s'<p+m<s_m$, this is a contradiction. 
If $t'>m$, then $q+t'>q+m>m$, and the right-hand side of \eqref{lem-4-1} contains the nonzero term
\[
b_{t'+\frac12}'\left(t'+\frac12\right) I_{q+t'+\frac12}.
\]
Since $q+t'>q+m>t_m$, this is again a contradiction. Thus, we have $\mathbb J\subset\{m\}$.

Moreover, comparing the coefficients of $\mathbf l$ yields
\[
f_m=-\left(m+\frac12\right)b_{-m-\frac12}'=0.
\]
Hence, Equation \eqref{lem-4-1} reduces to
\begin{align}
\sum_{i=s_m}^{t_m} d_{i+\frac12} I_{i+\frac12}
={}&-\left(m+\frac12\right)a'I_{p+m+\frac12}
-\left(m+\frac12\right)a'I_{q+m+\frac12} \notag\\[6pt]
&+b_{m+\frac12}'\left(m+\frac12\right) I_{p+m+\frac12}
+b_{m+\frac12}'\left(m+\frac12\right) I_{q+m+\frac12}
+d'I_{m+\frac12}. \label{lem-4-2}
\end{align}
Since $p+m<s_m$, $q+m>t_m$ and $p+m<m<q+m$, we obtain $b_{m+\frac12}'=a'$. Therefore, we have
\[
\Delta(I_{m+\frac12})
=
\sum_{i=s_m}^{t_m} d_{i+\frac12} I_{i+\frac12}
=
d'I_{m+\frac12},
\]
which implies $\Delta(I_{m+\frac12})\in\mathbb C I_{m+\frac12}$.

Next, write
\[
\Delta(I_{m+\frac12})=k_{m+\frac12} I_{m+\frac12}.
\]
On one hand,
\begin{equation}\label{lem-4-5}
\Delta(L_1+L_{-1}+I_{m+\frac12}+I_{m-\frac12})
=
\Delta(I_{m+\frac12}+I_{m-\frac12})
=
k_{m+\frac12}I_{m+\frac12}
+k_{m-\frac12}I_{m-\frac12}.
\end{equation}
On the other hand, there exist $k'\in\mathbb C$ and an element
\[
\sum_{i\in\mathbb I} a_i''L_i+\sum_{j\in\mathbb J} b_{j+\frac12}''I_{j+\frac12}\in\mathcal D
\]
such that
\begin{align}
\Delta(L_1+L_{-1}+I_{m+\frac12}+I_{m-\frac12})
=&
\left[
\sum_{i\in\mathbb I} a_i''L_i+\sum_{j\in\mathbb J} b_{j+\frac12}''I_{j+\frac12},
L_1+L_{-1}+I_{m+\frac12}+I_{m-\frac12}
\right] \notag\\[6pt]
&+k'\tau(L_1+L_{-1}+I_{m+\frac12}+I_{m-\frac12}). \label{lem-4-6}
\end{align}
Clearly $\sum_{i\in\mathbb I} a_i''L_i=a''(L_1+L_{-1})$ for some $a''\in\mathbb C$. Combining \eqref{lem-4-5} and \eqref{lem-4-6}, we deduce that
\begin{align}
k_{m+\frac12}I_{m+\frac12}+k_{m-\frac12}I_{m-\frac12}
={}&-\left(m+\frac12\right)a''I_{m+\frac32}
-\left(m+\frac12\right)a''I_{m-\frac12}
-\left(m-\frac12\right)a''I_{m+\frac12} \notag\\[6pt]
&-\left(m-\frac12\right)a''I_{m-\frac32}
+\sum_{j\in\mathbb J} b_{j+\frac12}''\left(j+\frac12\right) I_{j+\frac32}
+\sum_{j\in\mathbb J} b_{j+\frac12}''\left(j+\frac12\right) I_{j-\frac12} \notag\\[6pt]
&+k'(I_{m+\frac12}+I_{m-\frac12})
-\left(m+\frac12\right)b_{-(m+\frac12)}''\mathbf l
-\left(m-\frac12\right)b_{-(m-\frac12)}''\mathbf l. \label{lem-4-7}
\end{align}
Let
\[
p''=\min\{j\mid j\in\mathbb J,\ b_{j+\frac12}''\neq0\},
\qquad
q''=\max\{j\mid j\in\mathbb J,\ b_{j+\frac12}''\neq0\}.
\]
If $p''<m-1$, then $p''-\frac12<m-\frac32<m-\frac12$, and the right-hand side of \eqref{lem-4-7} contains the nonzero term
\[
b_{p''+\frac12}''\left(p''+\frac12\right) I_{p''-\frac12},
\]
a contradiction. If $q''>m$, then $q''+\frac32>m+\frac32>m+\frac12$, and the right-hand side of \eqref{lem-4-7} contains the nonzero term
\[
b_{q''+\frac12}''\left(q''+\frac12\right) I_{q''+\frac32},
\]
again a contradiction. Hence $\mathbb J\subset\{m-1,m\}$. Comparing the coefficients of $I_{m+\frac32}$ and $I_{m-\frac32}$ in \eqref{lem-4-7}, we get
\[
b_{m-\frac12}''=b_{m+\frac12}''=a''.
\]
Then \eqref{lem-4-5} and \eqref{lem-4-6} imply
\[
k_{m+\frac12}=k'=k_{m-\frac12}.
\]
The proof is complete.
\end{proof}
\end{lemma}
According to Corollary \ref{t-dd},
it is obvious that $\Delta (\mathbf{c} )=0$ for any local derivation $\Delta$.
\begin{lemma}\label{lemm-6}
Let $\Delta$ be a local derivation such that $\Delta(L_m)=\Delta(I_{m+\frac12})=0$ for all $m\in\mathbb Z$. Then
\[
\Delta(\mathbf{l})=0.
\]
\begin{proof}
Fix $m\in\mathbb Z$. We have
\[
\Delta(I_{m+\frac12})=kI_{m+\frac12}=0,
\]
and hence $k=0$.

By Corollary \ref{t-dd}, there exists $r\in\mathbb C$ such that $\Delta(\mathbf{l})=r\mathbf{l}$. On the one hand,
\begin{equation}\label{lemm-6-1}
\Delta(L_1+L_{-1}+I_{m+\frac12}+I_{m-\frac12}+\mathbf{l})=r\mathbf{l}.
\end{equation}
On the other hand, there exist $k'\in\mathbb C$ and an element
\[
\sum_{i\in\mathbb I} a_i''L_i+\sum_{j\in\mathbb J} b_{j+\frac12}''I_{j+\frac12}\in\mathcal D
\]
such that
\begin{align}
\Delta(L_1+L_{-1}+I_{m+\frac12}+I_{m-\frac12}+\mathbf{l})
={}&
\left[
\sum_{i\in\mathbb I} a_i''L_i+\sum_{j\in\mathbb J} b_{j+\frac12}''I_{j+\frac12},
L_1+L_{-1}+I_{m+\frac12}+I_{m-\frac12}+\mathbf{l}
\right] \notag\\[6pt]
&+k'\tau(L_1+L_{-1}+I_{m+\frac12}+I_{m-\frac12}+\mathbf{l}). \label{lemm-6-2}
\end{align}
By an argument analogous to that used in deriving Equation \eqref{lem-4-6}, we obtain
\[
r=2k'=2k=0.
\]
The proof is complete.
\end{proof}
\end{lemma}


\begin{thm}\label{thm-4-1}
Every local derivation of $\mathcal{D}$ is a derivation.
\begin{proof}
Let $\Delta$ be a local derivation of $\mathcal{D}$. We can always find a derivation $D_0$ such that $\Delta(L_0)=D_0(L_0)$. Set $\Delta_1=\Delta-D_0$. Then $\Delta_1(L_0)=0$.

By Equation \eqref{L-7}, we have $\Delta_1(L_1)=a_1L_1$. Set
\[
\Delta_2=\Delta_1+a_1\mathrm{ad}(L_0).
\]
Then $\Delta_2(L_0)=\Delta_2(L_1)=0$. By Lemma \ref{lem-L-3}, we have $\Delta_2(L_m)=0$ for all $m\in\mathbb Z$. According to Lemma \ref{lem-L-4}, there exists a constant $k$ such that
\[
\Delta_2(I_{m+\frac12})=kI_{m+\frac12}.
\]
Let
\[
\Delta_3=\Delta_2-k\tau,
\]
where $\tau$ is the outer derivation. Then
\[
\Delta_3(L_m)=\Delta_3(I_{m+\frac12})=0.
\]
By Lemma \ref{lemm-6}, we have $\Delta_3(\mathbf{l})=0$. Therefore $\Delta_3=0$, and hence
\[
\Delta=D_0-a_1\mathrm{ad}(L_0)+k\tau.
\]
Thus, $\Delta$ is a derivation.
\end{proof}
\end{thm}


\begin{thebibliography}{99}
        \bibitem{AKS}
        H. Abdelwahab, I. Kaygorodov and B. Sartayev, $\delta$-Poisson and transposed $\delta$-Poisson algebras, JoNAS. {\bf 1} (2026), no.~1. 

        \bibitem{AK}
        S.~A. Ayupov and K.~K. Kudaybergenov, Local derivations on finite-dimensional Lie algebras, Linear Algebra Appl. {\bf 493} (2016), 381--398.

       \bibitem{Bai} 
       C.~M. Bai et al., Transposed Poisson algebras, Novikov-Poisson algebras and 3-Lie algebras, J. Algebra {\bf 632} (2023), 535--566.

       \bibitem{B}
       K. Barron, On twisted modules for $N=2$ supersymmetric vertex operator superalgebras, in {\it Lie theory and its applications in physics}, 411--420, Springer Proc. Math. Stat., 36, Springer, Tokyo.
        \bibitem{BO}
       O.~Boranbaev, Local derivations on the twisted Heisenberg–Virasoro algebra, Asian-European J. Math. {\bf } (2026), 2650071 (13 pages).

        \bibitem{B&D}
        D. Burde and K. Dekimpe, Post-Lie algebra structures and generalized derivations of semisimple Lie algebras, Mosc. Math. J. {\bf 13} (2013), no.~1, 1--18, 189.

        \bibitem{C}
        C. Cheng et al., Quasi-Whittaker modules, J. Algebra {\bf 698} (2026), 288--315.

        \bibitem{CZZ}
        Y. Chen, K. Zhao and Y. Zhao, Local derivations on Witt algebras, Linear Multilinear Algebra {\bf 70} (2022), no.~6, 1159--1172.


        \bibitem{F98} 
        V.~T. Filippov, On $\delta$-derivations of Lie algebras, Siberian Math. J. {\bf 39} (1998), no.~6, 1218--1230.

        \bibitem{GMZ}
        D. Gao, Y. Ma and K. Zhao, Non-weight modules over the mirror Heisenberg-Virasoro algebra, Sci. China Math. {\bf 65} (2022), no.~11, 2243--2254.

        \bibitem{GZ}
        D. Gao and K. Zhao, Tensor product weight modules for the mirror Heisenberg-Virasoro algebra, J. Pure Appl. Algebra {\bf 226} (2022), no.~5, Paper No. 106929, 18 pp.
       
        

        \bibitem{Kad}
        R.~V. Kadison, Local derivations, J. Algebra {\bf 130} (1990), no.~2, 494--509.
      

       \bibitem{Kay}
       I. Kaygorodov, Non-associative algebraic structures: classification and structure, Commun. Math. {\bf 32} (2024), no.~3, 1--62.

       \bibitem{KKS}
       I. Kaygorodov, A. Khudoyberdiyev and Z. Shermatova, Quasi-derivations of Witt and related algebras, Res. Math. Sci. {\bf 13} (2026), no.~1, Paper No. 20, 23 pp.

       \bibitem{KJ}
       A.~K. Khudoyberdiyev and D. Jumaniyozov, Local derivations and automorphisms of nilpotent Lie algebras, Comm. Algebra {\bf 53} (2025), no.~5, 1921--1933.

       \bibitem{Lar}
       D.~R. Larson and A.~R. Sourour, Local derivations and local automorphisms of $B(X)$, in {\it Operator theory: operator algebras and applications, Part 2 (Durham, NH, 1988)}, 187--194, Proc. Sympos. Pure Math., 51, Part 2, Amer. Math. Soc., Providence, RI.
       
       \bibitem{LL}
       G.~F. Leger Jr. and E.~M. Luks, Generalized derivations of Lie algebras, J. Algebra {\bf 228} (2000), no.~1, 165--203.

       \bibitem{L}
       D. Liu et al., Irreducible modules over the mirror Heisenberg-Virasoro algebra, Commun. Contemp. Math. {\bf 24} (2022), no.~4, Paper No. 2150026, 23 pp.

       \bibitem{LC}
       H.~J. Liu and Z.~X. Chen, Biderivations of the mirror Heisenberg-Virasoro algebra, J. Algebra Appl. {\bf 24} (2025), no.~10, Paper No. 2550244, 20 pp.

      


       

       \bibitem{FKL} 
       B.~L. Macedo~Ferreira, I. Kaygorodov and V. Lopatkin, $\frac{1}{2}$-derivations of Lie algebras and transposed Poisson algebras, Rev. R. Acad. Cienc. Exactas F\'is. Nat. Ser. A Mat. RACSAM {\bf 115} (2021), no.~3, Paper No. 142, 19 pp.

       

        

        

       \bibitem{TYZ}
        H. Tan, Y.~F. Yao and K. Zhao, Classification of simple smooth modules over the Heisenberg-Virasoro algebra, Proc. Roy. Soc. Edinburgh Sect. A {\bf 155} (2025), no.~4, 1321--1365.

        \bibitem{WGL}
        Q. Wu, S.~L. Gao and D. Liu, Local derivations on the Lie algebra $W(2, 2)$, Linear Multilinear Algebra {\bf 72} (2024), no.~4, 631--643.

        

        \bibitem{XCK}
        C. Xu, $\delta$-biderivations of Virasoro related algebras, arXiv:2603.06005.

        \bibitem{YH}
        L. Yuan and Q. Hua, $\frac 12$-(bi)derivations and transposed Poisson algebra structures on Lie algebras, Linear Multilinear Algebra {\bf 70} (2022), no.~22, 7672--7701.

        \bibitem{ZC}
        Y. F. Zhao and Y. S. Cheng, Derivation algebra and automorphism group of the mirror Heisenberg-Virasoro algebra, Math. Theory Appl. {\bf 42} (2022), no.~4, 36--44.
	\end{thebibliography}
\end{document}